\title{Cousin's lemma in second-order arithmetic}
\author[J.M. Barrett]{Jordan Mitchell Barrett}
\address{Victoria University of Wellington, NZ}
\email{\randomize{math@jmbarrett.nz}}
\urladdr{https://jmbarrett.nz/}
\author[R.G. Downey]{Rodney G. Downey}
\address{Victoria University of Wellington, NZ}
\email{\randomize{rod.downey@vuw.ac.nz}}
\urladdr{https://homepages.ecs.vuw.ac.nz/~downey/}
\author[N. Greenberg]{Noam Greenberg}
\address{Victoria University of Wellington, NZ}
\email{\randomize{greenberg@msor.vuw.ac.nz}}
\urladdr{https://homepages.ecs.vuw.ac.nz/~greenberg/}
\date{\today}
\subjclass[2020]{03B30, 03F35, 03D78, 26A39}
\thanks{Downey and Greenberg were partially supported by the Marsden Fund of New Zealand. Many of the results in this paper are also contained in Barrett's honours thesis \cite{barrettThesis}.}
\theoremstyle{definition}
\newtheorem{theorem}{Theorem}[section]
\newtheorem{lemma}[theorem]{Lemma}
\newtheorem{definition}[theorem]{Definition}
\newtheorem{proposition}[theorem]{Proposition}
\newtheorem{remark}[theorem]{Remark}
\def\thmhead@plain#1#2#3{%
	\thmname{#1}\thmnumber{\@ifnotempty{#1}{ }\@upn{#2}}%
	\thmnote{ {\the\thm@notefont#3}}}
\let\thmhead\thmhead@plain
\setlist[enumerate,1]{label={(\roman*)}}
\setlist[enumerate,2]{label={(\alph*)}}
\setlist[enumerate]{itemsep=1pt}%,topsep=5pt}
\setlist[itemize]{itemsep=0pt}%,topsep=5pt}
\setlist{topsep=5pt}
\newcommand{\N}{\mathbb{N}}			% Natural numbers
\newcommand{\R}{\mathbb{R}}			% Real numbers
\newcommand{\K}{\mathcal{K}}
\newcommand{\RCA}{\mathsf{RCA}_0}		% Recursive comprehension
\newcommand{\WKL}{\mathsf{WKL}_0}		% Weak Konig's lemma
\newcommand{\ACA}{\mathsf{ACA}_0}		% Arithmetical comprehension
\newcommand{\ATR}{\mathsf{ATR}_0}		% Arithmetic transfinite recursion
\newcommand{\abs}[1]{{\left\lvert #1 \right\rvert}}		% Absolute value
\newcommand{\seq}[1]{{\left\langle{#1}\right\rangle}}
\newcommand{\Nat}{\mathbb N}
\renewcommand{\epsilon}{\varepsilon}
\newcommand{\s}{\sigma}
\newcommand{\andd}{\,\,\,\&\,\,\,}
\newcommand{\w}{\omega}
\newcommand{\rest}[1]{\,\upharpoonright\,#1}
\newcommand{\vphi}{\varphi}
\newcommand{\conc}{\hat{\,\,}}
\newcommand{\arxiv}[1]{\url{https://arxiv.org/abs/#1}}
\begin{document}

\begin{abstract}
    Cousin's lemma is a compactness principle that naturally arises when studying the gauge integral, a generalisation of the Lebesgue integral.	We study the axiomatic strength of Cousin's lemma for various classes of functions, using Friedman and Simpson's reverse mathematics in second-order arithmetic. We prove that, over $\RCA$:
    \begin{enumerate}
    	\item Cousin's lemma for continuous functions is equivalent to $\WKL$;
    	
    	\item Cousin's lemma for Baire class 1 functions is equivalent to $\ACA$;
    	
    	\item Cousin's lemma for Baire class 2 functions, or for Borel functions, are both equivalent to $\ATR$ (modulo some induction). 
    \end{enumerate}
\end{abstract}

\maketitle

\section{Introduction}

For real-valued functions, the most general version of the fundamental theorem of calculus relies on a generalization of both the Lebesgue and (improper) Riemann integrals, called, among other names, the Denjoy integral \cite{denjoyExtensionIntegraleLebesgue1912,gordonIntegralsLebesgueDenjoy1994}. This integral can integrate all derivatives, which the Riemann and Lebesgue integrals fail to do \cite{gordonIntegralsLebesgueDenjoy1994}.

Denjoy \cite{denjoyExtensionIntegraleLebesgue1912} first defined this integral in 1912, and shortly after, Luzin \cite{luzinProprietesIntegraleDenjoy1912} and Perron \cite{perronUberIntegralbegriff1914} gave equivalent characterisations. However, all these definitions were complex and highly nonconstructive, making Denjoy's integral impractical for applications \cite{gordonNonabsoluteIntegrationWorth1996}.
%In 1957, Kurzweil defined the \textit{gauge integral},
%whose formulation appears similar to that of the Riemann integral, and
%is a new formulation of the Denjoy integral.
%Denjoy's original specification of this integral was complex. Luzin \cite{luzinProprietesIntegraleDenjoy1912} and Perron \cite{perronUberIntegralbegriff1914} later gave equivalent, but not simple characterisations of Denjoy's integral. However, all these definitions were complex and highly nonconstructive, making Denjoy's integral impractical for applications \cite{gordonNonabsoluteIntegrationWorth1996}.
%In 1957, Kurzweil defined the \textit{gauge integral},
%whose formulation appears similar to that of the Riemann integral, and
%is a new formulation of the Denjoy integral.
In 1957, Kurzweil \cite{kurzweilGeneralizedOrdinaryDifferential1} discovered an equivalent, ``elementary'' definition, in the style of the Riemann integral. This re-formulation is widely known as the \textit{gauge integral}, or Henstock--Kurzweil integral.

For bounded functions with compact support, the gauge integral coincides with the Lebesgue integral, and hence the gauge integral gives a Riemann-like definition of the Lebesgue integral a for wide class of functions. We also remark that the gauge integral is suitable for integrating classes of 
highly discontinuous functions, and can be viewed as a mathematically rigorous formalisation of Feynman's path integral \cite{muldowney}.

A \emph{gauge} is simply a function $\delta\colon [0,1]\to \R^+$ from the unit interval to the positive real numbers. A \emph{$\delta$-fine partition} is a finite tagged partition $0=x_0 \le \xi_0 \le x_1 \le \xi_1 \le \dots \le x_n =1$ for which $\delta(\xi_i)\ge  x_{i+1}-x_i$. This generalises the mesh size of a partition: a partition has mesh size $\le \delta$ if it is $\delta$-fine for the constant map with value~$\delta$. The gauge integral is defined exactly like the Riemann integral, except that it allows arbitrary gauges rather than only constant ones: the integral $\int_0^1f\,dx$ is $K$ if for every $\epsilon>0$ there is a gauge~$\delta$ such that for every $\delta$-fine partition, the associated Riemann sum is within~$\epsilon$ of~$K$. So every Riemann integrable function is gauge integrable, by choosing constant gauges. But also the function $x^{-\nicefrac{1}{2}}$ is gauge integrable on~$[0,1]$, by taking, given $\epsilon>0$, a gauge satisfying $\delta(p)\le \epsilon p^{-\nicefrac{1}{2}}$ for $p>0$. And Dirichlet's function (the indicator function of the rationals) is integrable by taking $\delta(q_n) = \epsilon 2^{-n}$, where $\seq{q_n}$ lists the rationals in $[0,1]$.

\medskip

The key fact that enables the theory of the gauge integral is \emph{Cousin's lemma} \cite{cousinFonctionsVariablesComplexes1895}, which states that for any gauge~$\delta$, a $\delta$-fine partition exists. 
Without Cousin's lemma, Kurzweil's definition of the gauge integral would be vacuously satisfied by all functions and all values, trivialising the notion.  

\smallskip

The gauge integral and Cousin's lemma have had metamathematical exploration before; notably
in the setting of descriptive set theory by Dougherty and Kechris \cite{DK91}, Becker \cite{Bec92}, and Walsh \cite{WalshDefin}, for example. Recently, it was also explored in higher-order reverse mathematics by Normann and Sanders \cite{sandersNormannCousin,MR4182786}. These studies found that the gauge integral requires powerful axioms to prove. For example, working in third-order arithmetic, Normann and Sanders showed that Cousin's lemma and the existence of the gauge integral were each equivalent to full second-order arithmetic.

The point is that Cousin's lemma is a statement about \emph{all} functions from $[0,1]$ to $\R^+$; no definability assumptions are involved. It is thus a statement which is not expressible is second-order arithmetic, which only allows quantification over real numbers (but not sets of real numbers). Second-order arithmetic is the study of countable mathematics, including objects which can be coded by countable objects, for instance separable metric spaces, continuous functions, and Borel sets. 

Reverse mathematics of second-order arithmetic, as developed by H.~Friedman and S.~Simpson, is the project of understanding the proof-theoretic strength of the theorems of mathematics in terms of comprehension and induction strength required to prove them. What axioms of mathematics are actually required to prove a particular statement? The aim is to find optimal proofs, ones which require the least ``amount'' of axioms; and then show that these proofs are indeed the best that can be found, by finding a ``reversal'': a proof of the axioms used, starting with the investigated theorem. 

The main insight of the project of reverse mathematics is that almost all theorems of mainstream mathematics are equivalent to one of five axiomatic systems, and that these systems are linearly ordered by logical implication; see, for example~\cite{simpsonSubsystemsSecondOrder2009}. One advantage of working within second-order arithmetic is that the proof-theoretic strength is often aligned with complexity, as defined using the tools of computability theory. Very informally speaking, a theorem which asserts the existence of complicated objects, requires strong axioms to prove, and vice-versa. This connection between computability and proof theory has resulted in a rich body of research. 

\smallskip

It is thus natural to investigate the strength of Cousin's lemma within second-order arithmetic, but in order to do so, we must restrict ourselves to classes of countably-coded functions. In this paper we consider Borel functions, and sub-classes of these. 

Below we will observe that Cousin's lemma is a form of a compactness principle. We would thus naturally start by looking at the system $\WKL$ (weak K\"onig's lemma), which informally is understood to be equivalent to the compactness of the unit interval. But more specifically, it is equivalent to \emph{countable compactness}, for example, to the statement that any countable open cover of $[0,1]$ has a finite sub-cover \cite[Thm.IV.1.2]{simpsonSubsystemsSecondOrder2009}. For continuous functions, Cousin's lemma should be equivalent to its version for constant functions, but this itself uses a form of compactness, namely that continuous functions on $[0,1]$ obtain a minimum. Nonetheless, we are able to show:
\begin{itemize}
    \item Cousin's lemma for continuous gauges is equivalent to $\WKL$. 
\end{itemize}

However, once we go up the hierarchy of Borel functions (as measured say by the Baire class of a function), it turns out that this form of uncountable compactness is significantly stronger:
\begin{itemize}
    \item Cousin's lemma for Baire class 1 gauges is equivalent to $\ACA$. 
    \item Cousin's lemma for all Borel gauges is provable in $\ATR+\Delta^1_2$-induction. 
    \item Cousin's lemma for Baire class 2 gauges implies $\ATR$. 
\end{itemize}

We leave open the question of whether the extra induction is required.

\section{Cousin's lemma and compactness} % (fold)

Let $\delta\colon[0,1]\to \R^+$ be a gauge. We can view~$\delta$ as assigning, to each~$x\in [0,1]$, the open interval $(x-\delta(x), x+\delta(x))$. Cousin's lemma is very close to saying that this open cover of the unit interval has a finite sub-cover. It will be easier to work with this simplified version, rather than with tagged paritions, partly because this notion can be extended to other compact metric spaces; we will be working with Cantor space as well as the unit interval. 

The metamathematical twist is that for our reversals, we will be working over $\RCA$, in which case some of the gauges that we will work with will not have values in the model. For example, we will work with Baire class 1 functions, coded in the model by a sequence of continuous functions, where the values of the functions are Cauchy sequences, which need not converge. Nonetheless, relations such as $f(x)> r$ will be still definable in the model, and this will be enough to formalise Cousin's lemma. 

In light of this, it is important to note that a $\delta$-fine tagged parition $0= x_0 \le \xi_0 \le x_1 \le \xi_1 \le \dots \le x_{n-1}\le \xi_{n-1}\le x_n = 1$, with $x_{i+1}-x_i \le \delta(\xi_i)$ gives us not only the finite set of points $\xi_i$ giving us a cover, say $[0,1]\subseteq \bigcup_i (\xi_i-2\delta(\xi_i), \xi_i + 2\delta(\xi_i))$, but also the distances $x_{i+1}-x_i$, bounded by $\delta(x_i)$; and while $\delta(\xi_i)$ may not be in the model, the value $x_{i+1}-x_i$ is. Thus we define:

\begin{definition} \label{def:delta_fine_cover}
    Let $X$ be a compact metric space, and let $\delta\colon X\to \R^+$ be a gauge on~$X$. A \emph{$\delta$-fine cover} of~$X$ is a finite set $P\subseteq X$ and a function $p\mapsto r_p$ from $P$ to~$\R^+$ satisfying:  
    \begin{enumerate}
        \item $X = \bigcup_{p\in P} B(p,r_p)$; and
        \item For all $p\in P$, $r_p \le \delta(p)$. 
    \end{enumerate}
\end{definition}

Assuming that $\delta$ is a function coded in the model for which the relations $\delta(x)\in B$ (for an open or closed ball~$B$) are defined, we have:

\begin{lemma}[($\RCA$)] \label{lem:delta_fine_partition_and_cover}
    Let $\delta\colon [0,1]\to \R^+$ be a gauge. 
    \begin{enumerate}
        \item[(a)] If there is a $\delta$-fine tagged partition then there is a $2\delta$-fine cover. 
        \item[(b)] If there is a $\delta$-fine cover then there is a $2\delta$-fine tagged partition. 
    \end{enumerate}
\end{lemma}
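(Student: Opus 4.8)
The plan is to prove the two directions essentially by passing back and forth between the combinatorial data of a tagged partition and that of a cover, being careful that all quantities invoked (in particular the radii $r_p$) are objects that $\RCA$ can actually produce, rather than values of $\delta$ that may not exist in the model.

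For part (a), suppose $0 = x_0 \le \xi_0 \le x_1 \le \dots \le \xi_{n-1} \le x_n = 1$ is a $\delta$-fine tagged partition, so $x_{i+1}-x_i \le \delta(\xi_i)$ for each $i$. I would set $P = \{\xi_0,\dots,\xi_{n-1}\}$ and, for each $p = \xi_i \in P$, let $r_p = 2(x_{i+1}-x_i)$ (if a tag $\xi_i$ repeats, take the largest such value, which $\RCA$ can select from a finite set). Then $r_p \le 2\delta(\xi_i) = 2\delta(p)$, giving condition (ii). For condition (i), note $\xi_i \in [x_i, x_{i+1}]$, so any point $y \in [x_i, x_{i+1}]$ satisfies $|y - \xi_i| \le x_{i+1}-x_i < r_{\xi_i}$; hence $[x_i,x_{i+1}] \subseteq B(\xi_i, r_{\xi_i})$, and since the intervals $[x_i,x_{i+1}]$ cover $[0,1]$ we get $[0,1] = \bigcup_{p\in P} B(p,r_p)$. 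The key point is that each $r_p$ is defined purely from the rational-level data $x_{i+1}-x_i$ of the partition, so no appeal to the actual value $\delta(\xi_i)$ is needed. The only mild care is that, in $\RCA$, covering of $[0,1]$ by finitely many open intervals from the left endpoint is verified by a straightforward $\Sigma^0_0$ bookkeeping argument (using that the $x_i$ are rationals, or rational approximations thereof).

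For part (b), suppose we are given a $\delta$-fine cover: a finite $P \subseteq [0,1]$ and radii $r_p \le \delta(p)$ with $[0,1] = \bigcup_{p\in P} B(p, r_p)$. I would build a $2\delta$-fine tagged partition greedily from the left. Start with $x_0 = 0$. Having reached $x_i < 1$, choose $p \in P$ with $x_i \in B(p,r_p)$; put $\xi_i = p$ and let $x_{i+1} = \min\{x_i + r_p, 1\}$ — or, to stay inside the model using rationals, a rational slightly below this that still keeps $x_{i+1} - x_i \le r_p \le \delta(p)$ (one may also need $\xi_i \in [x_i,x_{i+1}]$, which, since $|x_i - p| < r_p$, can be arranged by inserting $p$ between $x_i$ and $x_{i+1}$, possibly refining with one extra subdivision point and the same tag). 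Each step advances $x_i$ by a positive amount; since $P$ is finite and the $r_p$ are bounded below by a positive rational, the process terminates after finitely many steps, reaching $x_n = 1$. The factor $2\delta$ is what gives the slack needed to accommodate placing the tag $p$ slightly outside the naive subinterval $[x_i, x_i+r_p]$ while still satisfying $x_{i+1}-x_i \le 2\delta(\xi_i)$.

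The main obstacle, and the only genuinely non-routine point, is the termination argument in (b) and, more generally, keeping everything within $\RCA$: one must check that the greedy construction can be carried out so that the sequence of $x_i$'s and $\xi_i$'s is given by a $\Delta^0_1$ recursion on a number parameter, that the bound on the number of steps is provable (using a lower bound on the positive rationals $r_p$ over the finite set $P$, plus $\Sigma^0_1$ bounding, which is available in $\RCA$), and that "membership in the finite union $\bigcup_{p\in P} B(p,r_p)$" is decidable enough to drive the recursion. Both halves are then routine verifications of conditions (i) and (ii) from Definition~\ref{def:delta_fine_cover} and of the defining inequalities of a $2\delta$-fine tagged partition.
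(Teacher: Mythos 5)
Your part (a) is correct and is essentially the paper's argument: take $P=\{\xi_i\}$, $r_{\xi_i}=2(x_{i+1}-x_i)$, and observe $[x_i,x_{i+1}]\subseteq B(\xi_i,r_{\xi_i})$ with $r_{\xi_i}\le 2\delta(\xi_i)$; the point that the radii come from the differences $x_{i+1}-x_i$ rather than from values of $\delta$ is exactly the paper's point.

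Part (b), however, has a genuine gap. Your left-to-right greedy requires, at each stage, a tag $p\in P$ with $x_i\in B(p,r_p)$ \emph{and} $p\in[x_i,x_{i+1}]$; your proposed repair (inserting $p$ as an extra subdivision point with the same tag) only applies when $p\ge x_i$. But the construction can reach a point $x_i<1$ all of whose covering balls are centered strictly to the left of $x_i$, and then there is no admissible tag and no way to continue. Concretely, take $P=\{0.1,\,0.25,\,0.7,\,0.95\}$ with $r_{0.1}=r_{0.25}=0.2$, $r_{0.7}=0.3$, $r_{0.95}=0.06$, so the balls are $(-0.1,0.3)$, $(0.05,0.45)$, $(0.4,1.0)$, $(0.89,1.01)$, which cover $[0,1]$. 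From $x_0=0$ the only choice is $p=0.1$, giving $x_1=0.2$; at $x_1$ the choice $p=0.1$ already violates the tag condition, and the choice $p=0.25$ gives $x_2=x_1+r_p\le 0.4$, a point covered only by $B(0.25,0.2)$ with $0.25<x_2$ --- so the recursion dead-ends, even though a $2\delta$-fine partition certainly exists (e.g.\ $0\le 0.1\le 0.2\le 0.25\le 0.42\le 0.7\le 0.9\le 0.95\le 1$). The trouble is that advancing only by $r_p$ from $x_i$, while insisting the tag's ball contain $x_i$, does not exploit the factor $2$; termination was never the real issue, correctness is. The paper avoids this entirely by a global argument: prune the cover so that no ball $U(p)=(p-r_p,p+r_p)$ is contained in another, list the surviving centers in increasing order $p_0<\dots<p_{n-1}$, show that consecutive balls must overlap between their centers ($p_{i+1}-p_i<r_{p_i}+r_{p_{i+1}}$), and choose each division point $x_{i+1}\in(p_i,p_{i+1})\cap U(p_i)\cap U(p_{i+1})$, with $x_0=0$, $x_n=1$. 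Then every center tags the interval it lies in, $[x_i,x_{i+1}]\subseteq U(p_i)$, so each length is $<2r_{p_i}\le 2\delta(p_i)$. If you want to keep a greedy flavour you would need to redesign the step (e.g.\ allow the next interval to extend from $x_i$ up to nearly $p+r_p$ for a suitable $p\ge x_i$, even when $B(p,r_p)$ does not contain $x_i$), and then prove such a $p$ always exists --- which is essentially what the paper's interleaving argument packages cleanly.
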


\begin{proof}
    For~(a), suppose that $0=x_0 \le \xi_0 \le x_1 \le \xi_1 \le x_2 \le \dots \le \xi_{n-1} \le x_n=1$ is a $\delta$-fine partition: for all $i<n$, $\delta(\xi_i)\ge x_{i+1}-x_i$. Let $P = \{\xi_i\,:\, i<n\}$, and for $i< n$ let $r_{\xi_i} = 2(x_{i+1}-x_i)$.  Then for all $i<n$, $r_{\xi_i} \le 2\delta(\xi_i)$ and trivially $[x_i,x_{i+1}]\subset (\xi_i-r_{\xi_i}, \xi_i + r_{\xi_i})$, so $P, \bar r_p$ is a $2\delta$-fine cover.

    \smallskip
    
    For~(b), suppose that $P = \left\{ p_0 < p_1 < \dots < p_{n-1}    \right\}$ equipped with $i\mapsto r_{p_i}$ is a $\delta$-fine cover. For brevity, for $p\in P$ let $U(p) = (p-r_p, p+r_p)$. By applying a reverse greedy algorithm, we may assume that~$P$ is minimal in that for no distinct $p,q$ in~$P$ do we have $U(p)\subseteq U(q)$. This implies that for all $i<n-1$, $p_{i+1}-p_i < r_{p_i} + r_{p_{i+1}}$; otherwise, a point between~$p_i$ and~$p_{i+1}$ would not be covered by $U(p_i)$ or by $U(p_{i+1})$; if $x\in U(p_j)$ then either $U(p_i)\subseteq U(p_j)$ (if $j<i$) or $U(p_{i+1})\subseteq U(p_j)$ (if $j>i+1$). So we choose 
     \[
     x_{i+1}\in (p_i,p_{i+1}) \cap U(p_i) \cap U(p_{i+1}),
     \]
    as well as $x_0 = 0$ and $x_n=1$; then $x_0 \le p_0 \le x_1 \le p_1 \le \dots$ is a $2\delta$-fine partition, as $[x_i,x_{i+1}]\subseteq U(p_i)$. 
\end{proof}

In light of \cref{lem:delta_fine_partition_and_cover}, we state:
\begin{definition}[(Cousin's lemma)] \label{def:Cousin}
    Let~$X$ be a compact metric space, and let~$\K$ be a class of functions, Cousin's lemma for~$\K$ on~$X$ states that every $\delta\colon X\to \R^+$ in~$\K$ has a $\delta$-fine cover. 
\end{definition}

If not stated otherwise, we work with $X = [0,1]$. For many of our arguments, though, it will be more convenient to work in Cantor space, under the metric $d(x,y) = 2^{-n}$ for the largest~$n$ satisfying $x\rest{n} = y\rest{n}$. We state one connection between Cousin's lemma on Cantor space and on the unit interval informally. Once we work with particular classes of functions, we will see that the following argument, in each particular case, can be made to hold in $\RCA$.

\begin{lemma} \label{lem:Cantor_to_UI}
    For reasonable classes $\K$ of functions, Cousin's lemma for~$\K$ on Cantor space implies Cousin's lemma for~$\K$ on the unit interval. 
\end{lemma}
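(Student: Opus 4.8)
The plan is to use the standard continuous surjection $h\colon \CS \to [0,1]$ given by binary expansion, $h(x) = \sum_{n} x(n) 2^{-n-1}$, and to pull a gauge $\delta$ on $[0,1]$ back to a gauge on Cantor space. Given $\delta \colon [0,1] \to \R^+$ in the class $\K$, I would define $\tilde\delta \colon \CS \to \R^+$ by first fixing, for each $x \in \CS$, a radius $s$ such that $h$ maps the ball $B(x, s)$ in $\CS$ into the ball $B(h(x), \delta(h(x)))$ in $[0,1]$ — this is possible because $h$ is uniformly continuous, and in fact we can take $s$ to depend only on $\delta(h(x))$, say $s = s(\delta(h(x)))$ with $s(r) \le r$ and $h[B(x,s(r))] \subseteq B(h(x), r)$ for all $x$. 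Thus $\tilde\delta = s \circ \delta \circ h$. The point is that composing $\delta$ with the continuous functions $h$ (on the inside) and $s$ (on the outside) keeps us inside "reasonable" classes $\K$: this is exactly the clause that has to be checked case by case, and is why the lemma is stated informally. Then apply Cousin's lemma for $\K$ on $\CS$ to get a $\tilde\delta$-fine cover $\tilde P \subseteq \CS$ with radii $q \mapsto \tilde r_q$.

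Next I would push this cover forward along $h$. Set $P = h[\tilde P]$, and for $p \in P$ choose some $q \in \tilde P$ with $h(q) = p$ and set $r_p = \delta(p)$ — or, more carefully, choose $r_p$ so that $B(p, r_p) \supseteq h[B(q, \tilde r_q)]$ while still $r_p \le \delta(p)$; since $\tilde r_q \le \tilde\delta(q) = s(\delta(h(q)))$ and $h$ maps $B(q, s(\delta(p)))$ into $B(p,\delta(p))$, the image $h[B(q,\tilde r_q)]$ is contained in $B(p, \delta(p))$, so taking $r_p = \delta(p)$ works. Because $h$ is surjective and $\CS = \bigcup_{q \in \tilde P} B(q, \tilde r_q)$, applying $h$ gives $[0,1] = h[\CS] = \bigcup_{q \in \tilde P} h[B(q,\tilde r_q)] \subseteq \bigcup_{p \in P} B(p, r_p)$, and since each $r_p \le \delta(p)$, the pair $(P, \bar r)$ is a $\delta$-fine cover of $[0,1]$. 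This is the entire argument; the only subtlety over $\RCA$ is that $\delta$ may not have values in the model, so I would phrase everything in terms of the definable relations $\delta(x) \in B$, choosing rational approximations — e.g. replace $r_p = \delta(p)$ by a rational $r_p$ with $r_p < \delta(p)$ chosen large enough to still cover, which is a finite search once $\tilde P$ is fixed.

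The main obstacle is not the geometry, which is routine, but the verification that the class $\K$ is closed under the relevant pre- and post-composition — precisely, that if $\delta \in \K$ then $s \circ \delta \circ h \in \K$ (as a gauge on $\CS$, with its defining relations available in the model). For continuous $\delta$ this is immediate; for Baire class $1$ or $2$ one must check that composing on the outside with the fixed continuous monotone function $s$ and on the inside with the fixed continuous $h$ does not raise the Baire class, and that the codes can be produced effectively in $\RCA$. This is exactly the informal hedge "for reasonable classes $\K$", and the honest statement is that in each of the concrete cases treated later in the paper (continuous, Baire class $1$, Baire class $2$, Borel) this closure is verified — so I would present the proof as above and then note that the class-specific closure check is deferred to the respective sections, where the hypotheses on $\K$ are made precise and the composition is shown to stay within $\K$ provably in $\RCA$.
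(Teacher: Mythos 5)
Your argument is correct and is essentially the paper's proof: the same binary-expansion map $\vphi(x)=\sum_n x(n)2^{-n-1}$, pulling the gauge back by composition and pushing the finite cover forward, with the same hedge that closure of $\K$ under this composition is checked case by case. Two small simplifications: since $\vphi$ is $1$-Lipschitz for the stated metric on $2^\w$ you can take your modulus $s$ to be the identity, and by setting $r_{\vphi(q)} = \tilde r_q$ (the radii already present in the Cantor-space cover, which are genuine reals in the model) you avoid the rational-approximation step needed when $\delta(p)$ may not exist.
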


\begin{proof}
    We use the standard map $\vphi\colon 2^\w\to [0,1]$ defined by 
    \[
        \vphi(x) = \sum x(n)2^{-n-1}. 
    \]
    Then~$\vphi$ is continuous and for all $x,y\in 2^\w$, 
    \[
        |\vphi(x)-\vphi(y)| \le d(x,y). 
    \]
    Hence, given $\delta\colon [0,1]\to \R^+$ in~$\K$, we let $\hat \delta = \vphi\circ \delta$, which is a gauge on Cantor space. Suppose that $(\hat P,\bar r_p)$ is a $\hat\delta$-fine cover. Let $P = \left\{ \vphi(p) \,:\,  p\in \hat P \right\}$, and let $r_{\vphi(p)} = r_p$; this is a $\delta$-fine cover. 
\end{proof}

\section{The Borel case} % (fold)

The ``classical'' proof of Cousin's lemma can be carried out in the system of $\ATR+\Sigma^1_1$-induction. Note that while we can formalise Borel codes and functions in $\RCA$, in the system $\ATR$ we can prove that for every Borel function~$f$ and every~$x$, there is a point $f(x)$. 

We need the following:

\begin{lemma}[($\ATR+\Sigma^1_1$-induction)] \label{lem:tree_lemma}
    If $T\subseteq 2^{<\w}$ is an infinite $\Pi^1_1$-definable tree, then~$T$ has a path. 
\end{lemma}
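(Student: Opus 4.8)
The plan is to use Kleene's normal form to present~$T$ as a $\Pi^1_1$ tree of the shape ``$U_\sigma$ is well-founded'', and then, using that $T$ is infinite together with a $\Sigma^1_1$-boundedness argument, to pass to a genuine (set) subtree of~$\CSf$ that is still infinite and is contained in~$T$; a path through the latter, obtained from weak K\"onig's lemma, is then a path through~$T$.

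Concretely: fix a $\Pi^1_1$ formula defining~$T$. By the normal form theorem, available already in $\ACA$, there is an arithmetically-defined assignment $\sigma\mapsto U_\sigma$ of a subtree $U_\sigma\subseteq\w^{<\w}$ to each $\sigma\in\CSf$ with $\sigma\in T$ iff $U_\sigma$ is well-founded. Since~$T$ is a tree, after replacing $U_\sigma$ by the finite join of the trees $U_\tau$ for $\tau\subseteq\sigma$ (and renaming), we may assume that $\tau\subseteq\sigma$ implies $U_\tau\subseteq U_\sigma$; thus $\sigma\in T$ iff $U_\tau$ is well-founded for every $\tau\subseteq\sigma$, and, when well-founded, $\operatorname{rank}(U_\tau)\le\operatorname{rank}(U_\sigma)$. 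For a well-order~$\beta$ set $T_\beta=\{\sigma\in\CSf : U_\sigma\text{ is well-founded of rank }\le\abs\beta\}$. By the monotonicity just arranged, $T_\beta$ is a subtree of~$\CSf$, and clearly $T_\beta\subseteq T$. Membership in $T_\beta$ is $\Sigma^1_1$ (exhibit an order-reversing labelling of $U_\sigma$ into~$\beta$), and so is non-membership (exhibit either a path through $U_\sigma$, or an embedding of~$\beta$ into $U_\sigma$ of the kind that forces $\operatorname{rank}(U_\sigma)>\abs\beta$ whenever $U_\sigma$ is well-founded; the two cases are jointly exhaustive by comparability of countable well-orderings, available in $\ATR$). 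Hence, by $\Sigma^1_1$-separation, $T_\beta$ is $\Delta^1_1$, uniformly in~$\beta$, so it exists as a set.

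It remains to produce one well-order~$\beta$ for which $T_\beta$ is infinite, i.e.\ has an element of each length. Since~$T$ is infinite and a tree, for every~$n$ there is $\sigma\in T$ with $\abs\sigma=n$, hence a well-order~$\alpha$ with $\sigma\in T_\alpha$; and ``there is a string of length~$n$ whose associated tree has an order-reversing labelling into~$\alpha$'' is $\Sigma^1_1$ in~$(n,\alpha)$. A $\Sigma^1_1$-boundedness argument then yields a single well-order~$\beta$ that works for all~$n$ simultaneously, so $T_\beta$ is infinite. Applying $\WKL$ (a subsystem of $\ATR$) to the infinite set-subtree $T_\beta$ of~$\CSf$ gives a path~$x$, and $x\rest n\in T_\beta\subseteq T$ for all~$n$, so $x$ is a path through~$T$.

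The delicate point, and where I expect the induction to enter, is the last paragraph: carrying out $\Sigma^1_1$-boundedness over $\ATR$, and — relatedly — coping with the $\Pi^1_1$-definedness of~$T$ when one needs to talk about finite slices like $T\cap\{0,1\}^{\le n}$ and their ranks. Justifying the requisite bounded comprehension, and the inductive verifications that the labellings (and ultimately the path) lie where they should, seems to be exactly what forces the use of $\Sigma^1_1$-induction; isolating the minimal amount needed is itself part of the task, consistent with the paper's stated uncertainty on this point.
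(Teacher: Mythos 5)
Your strategy (Kleene normal form, rank-bounded subtrees $T_\beta$, $\Sigma^1_1$-boundedness, then $\WKL$) is genuinely different from the paper's proof, which instead passes to the subtree $S=\{\s\in T: \s$ has infinitely many extensions in $T\}$, shows $S$ is $\Pi^1_1$ using $\Sigma^1_1$-choice, and builds the path directly by $\Pi^1_1$-dependent choice on numbers (Simpson VIII.4.10); that is exactly where $\Sigma^1_1$-induction enters for them. Your route can be made to work, but the pivotal step --- producing one $\beta$ with $T_\beta$ infinite --- is set up backwards as written, and as stated it fails. The family of $\alpha$ satisfying ``there is $\s$ of length $n$ whose tree $U_\s$ admits an order-reversing labelling into $\alpha$'' is not a $\Sigma^1_1$ set of well-orderings: every ill-founded linear order $\alpha$ satisfies it for every $n$ (map a node of depth $k$ to the $k$-th term of a descending sequence of $\alpha$), so $\Sigma^1_1$-boundedness simply does not apply to it. Moreover the condition is upward closed in $\alpha$, so even its well-ordered part contains arbitrarily large well-orders and admits no bound; and in any case bounding a family of ordinals lying \emph{above} the minimal ranks cannot convert ``for each $n$ there is some $\alpha$'' into ``one $\beta$ works for all $n$''.

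The repair is to bound the dual, downward family: $A=\{\alpha:\ \exists n\ \forall \s\in 2^n\ (\exists$ a path through $U_\s$, or $\exists$ a monotone map witnessing $\mathrm{rank}(U_\s)\ge\abs{\alpha})\}$. This is $\Sigma^1_1$ only because there are just $2^n$ strings of length $n$, so the finitely many set-witnesses can be amalgamated (finite $\Sigma^1_1$-choice, i.e.\ your $\Sigma^1_1$-induction); and its members are well-orders only because $T$ is infinite: some $\s\in 2^n$ has $U_\s$ well-founded, so for that $\s$ the witness must be the monotone map, which forces $\alpha$ to be well-founded. With $A\subseteq\mathrm{WO}$ and $\Sigma^1_1$, boundedness plus comparability (both available in $\ATR$) give a bound $\beta$, and then $T_\beta$ is infinite: if some level $n$ had no string of rank $\le\abs{\beta}$, then $\beta+1$ would belong to $A$, contradicting the bound. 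So finite branching and the infinitude of $T$ are used at precisely the point your sketch glosses over; without that reversal your last paragraph does not go through. A smaller issue of the same flavour: the claimed $\Delta^1_1$-ness of $T_\beta$ needs the $\ATR$-provable comparison of a \emph{tree's} rank against a given well-order (e.g.\ by leaf-removal recursion along $\beta$, or by working with Kleene--Brouwer orders and $\mathsf{CWO}$), not literally comparability of two well-orderings; this is fixable but should be said.
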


\begin{proof}
    Let 
    \[
            S = \left\{ \s\in T \,:\,  (\exists^\infty \tau\succeq \s)\,\,\tau\in T \right\}. 
        \]    
    Then~$S$ is $\Pi^1_1$ (this uses $\Sigma^1_1$-choice, which is provable in $\ATR$; see \cite[VIII.3.21]{simpsonSubsystemsSecondOrder2009}). By assumption, the empty sequence is in~$S$; and every $\s\in S$ has a child $\s\conc i$ in~$S$. So we can apply $\Pi^1_1$-dependent choice on numbers, which is provable in  $\ATR+\Sigma^1_1$-induction, \cite[VIII.4.10]{simpsonSubsystemsSecondOrder2009}. 
\end{proof}

\begin{proposition}[($\ATR+\Delta^1_2$-induction)] \label{prop:Cousin_for_Borel}
    Cousin's lemma holds for all Borel functions. 
\end{proposition}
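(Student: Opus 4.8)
The plan is to carry out the classical bisection proof of Cousin's lemma inside $\ATR$, with \cref{lem:tree_lemma} taking the place of the usual appeal to sequential compactness. First I would reduce to Cantor space: by \cref{lem:Cantor_to_UI} — whose argument, for the class of Borel functions, goes through in $\ATR$, since the composite $\delta\circ\vphi$ of a Borel $\delta$ with the continuous map $\vphi\colon2^\w\to[0,1]$ is again Borel, with a code obtainable from codes for $\delta$ and $\vphi$ — it suffices to prove that every Borel gauge $\delta\colon2^\w\to\R^+$ has a $\delta$-fine cover of $2^\w$. At this point I would record the two facts about Borel gauges that I will use: working in $\ATR$, every Borel function is total, so $\delta(x)$ is a genuine positive real for each $x$; and, since $\delta(p)>r$ can be written both as $\exists y\,(\delta(p)=y \wedge y>r)$ and as $\forall y\,(\delta(p)=y\to y>r)$, this relation is $\Delta^1_1$ in a Borel code for $\delta$ and the parameter $p$.

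Next, arguing by contradiction, suppose there is no $\delta$-fine cover of $2^\w$. Call $\s\in2^{<\w}$ \emph{bad} if there is no $\delta$-fine cover of the cylinder $[\s]$. Unwinding \cref{def:delta_fine_cover} in Cantor space, $[\s]$ has a $\delta$-fine cover precisely when there are finitely many strings $\rho_0,\dots,\rho_{k-1}$ with $[\s]\subseteq\bigcup_{j<k}[\rho_j]$ such that for each $j$ there is $p\succeq\rho_j$ with $\delta(p)>2^{-\abs{\rho_j}}$; by the complexity of $\delta(p)>r$ noted above, ``$[\s]$ has a $\delta$-fine cover'' is $\Sigma^1_1$, and so ``$\s$ is bad'' is $\Pi^1_1$. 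I would then work with the $\Pi^1_1$-definable tree (downward closed by construction)
\[
    T=\{\,\s\in2^{<\w}:\text{every }\tau\subseteq\s\text{ is bad}\,\}.
\]
The empty string lies in $T$, since $[\langle\rangle]=2^\w$ is bad by assumption; and $T$ has no leaves, because if $\s\in T$ then $\s$ is bad, so at least one child $\s\conc i$ is bad — otherwise $\delta$-fine covers of $[\s\conc0]$ and of $[\s\conc1]$ would union to one of $[\s]=[\s\conc0]\cup[\s\conc1]$ — and that child, whose proper initial segments are initial segments of $\s$, then lies in $T$. An induction on length gives that $T$ has a node of every length, hence is infinite, so \cref{lem:tree_lemma} produces a path $x\in2^\w$ through $T$.

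This yields the desired contradiction: $x\rest n$ is bad for every $n$, but $\delta(x)>0$, so choosing $n$ with $2^{-n}<\delta(x)$ and taking $r_x=\delta(x)$, the pair $(\{x\},r_x)$ is a $\delta$-fine cover of $[x\rest n]$ — since $d(x,y)\le2^{-n}<\delta(x)=r_x$ for every $y\in[x\rest n]$ — contradicting that $x\rest n$ is bad.

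The main obstacle, and essentially the only place where axioms beyond $\ATR$ enter, is the complexity bookkeeping required to make \cref{lem:tree_lemma} applicable: one needs ``$\s$ is bad'' to be genuinely $\Pi^1_1$ (which is exactly why the gauge must be evaluated, hence why we pass to $\ATR$), and then one needs the induction establishing that $T$ is infinite, which runs on a formula of complexity $\Pi^1_1$ — or higher, depending on the coding of Borel functions — and so goes beyond the $\Sigma^1_1$-induction already used inside \cref{lem:tree_lemma}. I expect $\Delta^1_2$-induction to cover this comfortably, while leaving open, as the introduction notes, whether weaker induction — or none at all over $\ATR$ — would suffice.
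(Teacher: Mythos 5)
Your proof is correct, and after the reduction to Cantor space it takes a genuinely different route from the paper's. The paper defines the $\Sigma^1_1$ set $G$ of cylinders contained in a single ball $B(x,\delta(x))$ and the $\Pi^1_1$ tree $T$ of strings with no initial segment in $G$, uses \cref{lem:tree_lemma} to show $T$ is \emph{finite}, and then must actually assemble the cover inside the model: $\Delta^1_2$-induction is spent showing the finite tree $T$ exists as a set, and $\Sigma^1_1$-choice (or $\Sigma^1_1$-induction) on extracting the witnesses $x_\s$ for the finitely many $\s\in G$ that cover $2^\w$. You instead run a single reductio on the tree of ``bad'' cylinders (those admitting no finite $\delta$-fine cover), apply \cref{lem:tree_lemma} to get a path $x$, and contradict $\delta(x)>0$; the existence of the cover then follows by classical logic, with no explicit extraction step. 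Both arguments share the essential ingredients (Cantor space, a $\Pi^1_1$-definable tree, the tree lemma, positivity of the gauge along a path), but the bookkeeping differs, and this is where your route buys something: your only induction is the $\Pi^1_1$ induction showing your tree is infinite, and since $\Pi^1_1$- and $\Sigma^1_1$-induction are interderivable over $\RCA$ by the usual contrapositive trick, your worry that this ``goes beyond $\Sigma^1_1$-induction'' is unfounded --- your argument appears to go through in $\ATR+\Sigma^1_1$-induction, which is weaker than the hypothesis stated in the proposition and is directly relevant to the open question about how much induction is needed. Two small points to tidy: your string reformulation of ``$[\s]$ has a $\delta$-fine cover'' allows tags outside $[\s]$, whereas \cref{def:delta_fine_cover} applied to the subspace $[\s]$ requires $P\subseteq[\s]$; this is harmless because every implication you actually use (the root is bad, the no-leaves step, the one-ball cover of $[x\rest{n}]$) holds for the string version, but you should fix one reading throughout. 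Also, when converting the string-sense cover of $2^\w$ back into a cover in the sense of \cref{def:delta_fine_cover}, phrase the $\Sigma^1_1$ statement with a single existential over a finite sequence coding the strings, tags, and rational radii $q_j$ with $2^{-\abs{\rho_j}}<q_j\le\delta(p_j)$; otherwise collecting the per-$j$ witnesses needs a final whiff of $\Sigma^1_1$-induction, just as in the paper's last step.
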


\begin{proof}
    By \cref{lem:Cantor_to_UI}, we work with Cantor space. Let $\delta\colon 2^\w\to \R^+$ be a Borel function. Let 
    \[
        G = \left\{ \s\in 2^{<\w} \,:\,  (\exists x\in [\s])\,\,\, [\s]\subseteq B(x,\delta(x)) \right\}. 
    \]
    Note that $[\s]\subseteq B(x,\delta(x))$ if and only if $\delta(x)> 2^{-|\s|+1}$, so this is a Borel condition. However, the quanitification over all $x\in [\s]$ means that~$G$ is $\Sigma^1_1$ (relative to a code for~$\delta$). 

    Now let 
    \[
        T = \left\{ \s\in 2^{<\w} \,:\,  (\forall \tau\preceq \s)\,\,\tau\notin G \right\}. 
    \]
    Then~$T$ is a $\Pi^1_1$ tree. We claim that it is finite. If not, then by \cref{lem:tree_lemma}, there is some $y\in [T]$. Let~$n$ be sufficiently large so that $2^{-n}< \delta(y)$; this contradicts $y\rest{n}\in T$. 

    \smallskip
    
    So we know that there is some~$n$ which bounds the length of all the strings in~$T$. By $\Delta^1_2$ induction, $T$ exists, as a finite set: by induction on the length-lexicographic order of strings of length~$\le n$, we show that for all $\tau\in 2^{\le n}$, $T\cap \{\s\,:\, \s \text{ precedes }\tau\}$ is a finite set in the model. The formula over which we induct is of the form $(\exists m\le 2^n)(\Pi^1_1\andd \Sigma^1_1)$, so $\Delta^1_2$-induction certainly suffices. 

    It follows that the set of leaves of~$T$ exists (again as a finite set). By taking all immediate extensions of the leaves of~$T$, we obtain a finite antichain $R\subseteq G$ of strings with $2^\w = [R]^\preceq = \bigcup \left\{ [\s] \,:\, \s\in R  \right\}$. 

    Finally, there is a sequence $\seq{x_{\s}\,:\, \s\in R}$ witnessing that each $\s\in R$ is in~$G$, i.e., $x_\s\in [\s]$ and $[\s]\subseteq B(x,\delta(x))$. This follows from $\Sigma^1_1$-choice (or by $\Sigma^1_1$-induction, as~$R$ is finite). Letting $P = \left\{ x_\s \,:\,  \s\in R \right\}$ and $r_{x_\s} = \delta(x_\s)$ gives a $\delta$-fine cover. 
\end{proof}

\section{The continuous case} % (fold)

In this section we show that over the base system~$\RCA$ of recursive comprehension, Cousin's lemma for continuous functions is equivalent to weak K\"onig's lemma~$\WKL$. If $\delta$ is continuous then for all~$x$, $\delta(x)$ exists, and so for a $\delta$-fine cover $(P,\bar r_p)$, we can always choose $r_p = \delta(p)$. 

\medskip

In one direction, the argument is quick. 

\begin{proposition}[($\WKL$)] \label{prop:WKL_implies_continuous_Cousin}
    Cousin's lemma holds for continuous functions.
\end{proposition}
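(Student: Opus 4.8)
The plan is to show that $\WKL$ suffices to produce a $\delta$-fine cover of $[0,1]$ for a continuous gauge $\delta$, and the natural route is via the countable-compactness form of $\WKL$: over $\RCA$, $\WKL$ is equivalent to the Heine--Borel statement that every cover of $[0,1]$ by a countable sequence of rational open intervals has a finite subcover (\cite[Thm.~IV.1.2]{simpsonSubsystemsSecondOrder2009}). So first I would use the code for the continuous function $\delta$ to extract, working in $\RCA$, a countable open cover of $[0,1]$: for each $x\in[0,1]$ we have $\delta(x)>0$, hence some rational interval around $x$ is contained in $B(x,\delta(x))$. Concretely, enumerate all pairs $(q,s)$ with $q\in\Q\cap[0,1]$ and $s\in\Q^+$ such that the code for $\delta$ guarantees $\delta(y)>s$ for all $y$ in the rational interval $I_{q,s} := (q-s,q+s)$; this is an arithmetical (indeed $\Sigma^0_1$, using the effective local information in a continuous code) condition on $(q,s)$, so the sequence $\seq{I_{q,s}}$ exists in $\RCA$. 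Continuity of $\delta$ and the fact that $\delta(x)>0$ for every $x$ ensure that these intervals cover $[0,1]$: given $x$, pick rational $s$ with $2s<\delta(x)$ and, by continuity, a rational $q$ close enough to $x$ that $\delta>s$ on $I_{q,s}$ and $x\in I_{q,s}$.

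Next I would apply $\WKL$ in its Heine--Borel form to this countable cover to obtain a finite subcover $I_{q_0,s_0},\dots,I_{q_{n-1},s_{n-1}}$ with $[0,1]\subseteq\bigcup_{j<n} I_{q_j,s_j}$. Now I want to convert this into a $\delta$-fine cover in the sense of \cref{def:delta_fine_cover}. The subtlety is that the tags of a $\delta$-fine cover must be actual points of $[0,1]$ with associated radii $\le\delta$ at those points; $q_j$ might be slightly outside $[0,1]$ or $I_{q_j,s_j}$ might poke outside. So for each $j<n$ let $p_j$ be $q_j$ clipped to $[0,1]$ (i.e.\ $\max(0,\min(1,q_j))$) and set $r_{p_j} := s_j$. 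Since $\delta > s_j$ throughout $I_{q_j,s_j}$ and $p_j\in I_{q_j,s_j}$ (the clipped point is still within $s_j$ of $q_j$, using that $[0,1]$ meets $I_{q_j,s_j}$), we get $r_{p_j}=s_j<\delta(p_j)$, giving condition (ii); and $B(p_j,r_{p_j})\supseteq I_{q_j,s_j}\cap[0,1]$ is a bit delicate since clipping shifts the center — instead it is cleaner to take $r_{p_j} := 2 s_j$, noting $\delta > s_j$ on $I_{q_j,s_j}$ still does not immediately give $\delta(p_j)\ge 2s_j$. The honest fix: choose at the outset $s$ with $4s<\delta(x)$ in the covering step, so that $\delta\ge 3s$ on $I_{q,s}$ (shrinking slightly if needed), hence after clipping $\delta(p_j)\ge 3 s_j > 2 s_j =: r_{p_j}$ and $B(p_j, 2s_j)\supseteq I_{q_j,s_j}$; then $(P,\bar r)$ with $P=\{p_j\}$ covers $[0,1]$ and satisfies (ii).

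Alternatively — and this is probably the slicker presentation — I would avoid the clipping bookkeeping entirely by first invoking \cref{lem:Cantor_to_UI} to reduce to proving Cousin's lemma for continuous gauges on Cantor space, where ``intervals'' are clopen cylinders $[\s]$ and there is no boundary issue: the cover step produces a sequence of $\s$ with $[\s]\subseteq B(x,\delta(x))$ for some (indeed every) $x\in[\s]$, $\WKL$ gives a finite subcover, refining to a finite antichain $R$ with $2^\w=\bigcup_{\s\in R}[\s]$, and then one picks $x_\s\in[\s]$ (e.g.\ $\s\conc 000\cdots$, which exists in $\RCA$) with $r_{x_\s}=\delta(x_\s)$. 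One must still check \cref{lem:Cantor_to_UI} goes through in $\RCA$ for the class of continuous functions, but this is routine: $\hat\delta=\delta\circ\vphi$ is a continuous code obtained effectively, and the pushforward of a $\hat\delta$-fine cover under $\vphi$ is a $\delta$-fine cover, with all points and radii present in the model.

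I expect the only real obstacle to be the packaging: getting from a continuous code for $\delta$ to a \emph{sequence} (not just a set) of basic open sets that provably covers $[0,1]$, all within $\RCA$, and then massaging a finite subcover into the exact form demanded by \cref{def:delta_fine_cover} (tags in the space, radii bounded by $\delta$ at those tags). Once the reduction to Cantor space is in place, this obstacle essentially disappears, since clopen cylinders are their own balls and the tag can be taken to be any chosen point of the cylinder; so I would organize the proof as: (1) reduce to Cantor space via \cref{lem:Cantor_to_UI}; (2) build the $\Sigma^0_1$ sequence of cylinders refining $\seq{B(x,\delta(x))}$; (3) apply Heine--Borel/$\WKL$; (4) refine to a finite antichain and read off the $\delta$-fine cover.
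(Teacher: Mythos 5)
Your argument is correct, but it takes a genuinely different route from the paper's. The paper's proof is a two-liner: $\WKL$ proves that a continuous function on $[0,1]$ attains its infimum (Simpson, IV.2.3), so $\delta$ has a positive minimum $r$, and a uniform partition of mesh $2^{-n}<r$ with arbitrary tags is already $\delta$-fine --- no Heine--Borel extraction, no subcover bookkeeping, no passage to Cantor space. You instead run the ``classical'' proof of Cousin's lemma: build a countable rational-interval cover on each member of which the code certifies a positive rational lower bound for $\delta$, apply the Heine--Borel form of $\WKL$, and massage the finite subcover into a $\delta$-fine cover. Your version does work (the $\Sigma^0_1$ enumeration of certified pairs $(q,s)$ is legitimate, and restricting $q$ to $\Q\cap[0,1]$ actually makes the clipping discussion unnecessary: take $P=\{q_j\}$ and $r_{q_j}=s_j$, since $\delta(q_j)>s_j$ and $B(q_j,s_j)\cap[0,1]=I_{q_j,s_j}\cap[0,1]$), and it has the virtue of generalizing to settings where $\delta$ need not attain a minimum; but for continuous gauges the extreme-value route is strictly shorter and avoids all the radius arithmetic you spend the second paragraph repairing. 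It is worth noting that the paper does use your Heine--Borel formulation of $\WKL$, but in the \emph{reversal} (Theorem~\ref{thm:cl->hb}), not in this direction. Your Cantor-space variant is also fine, except that \cref{lem:Cantor_to_UI} transfers Cousin's lemma \emph{from} Cantor space \emph{to} $[0,1]$ by pulling the gauge back along $\vphi$, i.e.\ $\hat\delta=\delta\circ\vphi$ (your first mention of it has the composition written the right way around, matching the intended use, though the paper's own display writes $\vphi\circ\delta$); in any case one must check, as you say, that the transfer is $\RCA$-valid for continuous codes, which it is.
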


\begin{proof}
    Let $\delta\colon [0,1]\to \R^+$ be continuous. Weak K\"onig's lemma implies that~$\delta$ obtains a minimum~$r$, which must be positive. A partition of~$[0,1]$ into intervals of length $2^{-n} < r$, and choosing any points as tags, gives a $\delta$-fine partition.
\end{proof}

Note that the argument works for Cantor space as well. 

\smallskip

In the other direction, we could argue directly by coding a $\Pi^0_1$ class by an effectively closed subset of~$[0,1]$; but the argument is combinatorial simpler by passing via the Heine-Borel theorem. 

\begin{theorem}[($\RCA$)]\label{thm:cl->hb}
% \label{xxxx}
    Cousin's lemma for continuous functions implies $\WKL$.
\end{theorem}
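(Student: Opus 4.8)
The plan is to derive $\WKL$ from its known equivalent over $\RCA$ — the Heine-Borel theorem for $[0,1]$, i.e.\ the statement that every countable open cover of $[0,1]$ by rational intervals has a finite subcover (see \cite[Thm.\ IV.1.2]{simpsonSubsystemsSecondOrder2009}). So it suffices to take an arbitrary countable open cover $\langle (a_n, b_n) : n \in \N\rangle$ of $[0,1]$ (with $a_n, b_n$ rationals, say) and produce a finite subcover, using Cousin's lemma for a suitably constructed continuous gauge $\delta$.

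First I would define, for each $x \in [0,1]$, a radius $\delta(x)$ that reflects "how deep inside the cover" $x$ sits when witnessed by the first interval that contains it. Concretely, for $x \in [0,1]$ let $n(x)$ be the least $n$ with $x \in (a_n, b_n)$ — this exists since the intervals cover $[0,1]$. A first attempt is $\delta(x) = \tfrac{1}{2}\operatorname{dist}(x, \{a_{n(x)}, b_{n(x)}\})$, but this is not continuous (it jumps where $n(x)$ changes). The fix is the standard "take the pointwise sup over all witnesses, with a weight that decays in the index" trick: set
\[
    \delta(x) = \sup_{n\,:\,x \in (a_n,b_n)} \; 2^{-n}\cdot\min\{\,x - a_n,\; b_n - x,\; 1\,\}.
\]
This is positive on $[0,1]$ (the $n(x)$-th term is positive), and it is continuous: each term $x \mapsto 2^{-n}\min\{x-a_n, b_n-x, 1\}$, extended by $0$ outside $(a_n,b_n)$, is continuous with sup-norm $\le 2^{-n}$, so the series of these "bumps" — actually the sup, but one can equally use $\sum_n$ instead of $\sup_n$ to make uniform convergence transparent — converges uniformly and defines a continuous function with a code available in $\RCA$. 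I would double-check that $\RCA$ proves this $\delta$ is continuous and strictly positive; this is a routine uniform-convergence argument (Weierstrass $M$-test, which is available at this level).

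Next, apply Cousin's lemma for continuous functions to $\delta$ to obtain a $\delta$-fine cover $(P, \bar r_p)$: a finite $P \subseteq [0,1]$ with $[0,1] = \bigcup_{p \in P} B(p, r_p)$ and $r_p \le \delta(p)$ for each $p$. For each $p \in P$, pick any $n_p$ with $p \in (a_{n_p}, b_{n_p})$ and $2^{-n_p}\min\{p - a_{n_p}, b_{n_p} - p, 1\} \ge \tfrac{1}{2}\delta(p)$ — such an $n_p$ exists by definition of the sup (if I used $\sum$ instead of $\sup$, I would instead take $n_p = n(p)$ and argue directly that the first witness already controls the relevant term, or simply bound $\delta(p)$ by a single term up to the tail). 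Then $B(p, r_p) \subseteq B(p, \delta(p)) \subseteq (a_{n_p}, b_{n_p})$: indeed $\delta(p) \le \min\{p - a_{n_p}, b_{n_p} - p\}$ whenever the chosen term dominates, which holds after adjusting constants (e.g.\ build the factor $\tfrac14$ into the definition of $\delta$ so that $\delta(p) \le \tfrac14\min\{p-a_{n_p}, b_{n_p}-p\}$). Hence $\{(a_{n_p}, b_{n_p}) : p \in P\}$ is a finite subcover of the original open cover, and the finiteness of $P$ makes the set $\{n_p : p \in P\}$ a finite set in the model, extractable by $\Delta^0_0$ bounded search. This gives Heine-Borel, hence $\WKL$.

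The main obstacle I anticipate is purely bookkeeping: arranging the constants in the definition of $\delta$ so that a $\delta$-fine ball around any point is genuinely contained in a single cover-interval, \emph{and} simultaneously keeping $\delta$ continuous (and positive) in a way $\RCA$ verifies. The tension is that continuity wants $\delta$ to be a nicely convergent sum/sup of bump functions, while the containment argument wants $\delta(p)$ to be dominated by the "correct" term for whichever interval we select. Using $\sum_n$ in place of $\sup_n$ and building in enough of a safety factor (a $\tfrac14$ or smaller) resolves this, at the cost of a slightly fussier verification that the selected interval's term still dominates the tail; I would present the $\sum$-version and spell out that one selects $n_p$ large enough that the tail $\sum_{n > n_p}2^{-n}$ is a small fraction of $\delta(p)$, which is possible since $\delta(p) > 0$. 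Everything else — continuity, existence of $n(x)$, the final finite-subcover extraction — is standard and low-strength.
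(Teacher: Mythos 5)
Your proposal is correct and follows essentially the same route as the paper: both reduce to the Heine--Borel theorem and build a continuous gauge as a weighted combination of the bump functions $d(x,U_n^\complement)$ (your $\min\{x-a_n,\,b_n-x,\,1\}$ is exactly the paper's $d_n$), then apply Cousin's lemma and extract a finite subcover from the finitely many tags using $\Sigma^0_1$ bounding. The only differences are bookkeeping: the paper uses $\tfrac14\sum_n 2^{-n}d_n$ and shows each $\delta$-fine ball lies in $\bigcup_{n\le k}U_n$, whereas your $\sup$ variant puts each ball inside a single $U_{n_p}$.
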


\begin{proof}
    $\WKL$ is equivalent to the Heine-Borel theorem, which states the countable compactness of the unit interval: if $\seq{U_n}_{n\in \Nat}$ is a sequence of open intervals and $[0,1]\subseteq \bigcup_n U_n$, then there is finite sub-cover: there is some $N\in \Nat$ such that $[0,1]\subseteq \bigcup_{n<N} U_n$ (see \cite[Lem IV.1.1]{simpsonSubsystemsSecondOrder2009}). 

    Let $\seq{U_n}$ be such an open cover of~$[0,1]$. 

    \medskip
    
    For $n\in \Nat$, let $d_n(x) = d(x,U_n^\complement)$ be the distance from a point~$x$ to the complement of~$U_n$. A code for~$d_n$ as a continuous function can be obtained uniformly from the (end-points of) the interval~$U_n$. Let 
    \[
        \delta(x) = \tfrac{1}{4}\sum_{n\in \Nat} 2^{-n}d_n(x);
    \]
    Then~$\delta$ is a continuous function on~$[0,1]$ (a code can be obtained effectively from the codes for the $d_n$'s). Since $\seq{U_n}$ covers the unit interval, for all~$x$ there is some $n$ such that $d_n(x)>0$, so $\delta$ is positive on~$[0,1]$. 

    \medskip
    
    We claim: for all $p\in [0,1]$ and $k\in \Nat$, 
    \begin{itemize}
        \item[(*)] if $\delta(p)>2^{-k}$ then $(p-\delta(p),p+\delta(p))\subseteq \bigcup_{n\le k} U_n$. 
    \end{itemize}

    Suppose not: then for all $n\le k$, $d_n(p)< \delta(p)$. As a result, 
    \[
        \delta(p) < 
        \tfrac14 \sum_{n\le k}2^{-n}\delta(p) + \tfrac14\sum_{n>k}2^{-n}d_n(p) \le
        \tfrac12 \delta(p) + \tfrac{1}{4} 2^{-k} < \delta(p),
    \]
    which is impossible; of course we use $d_n(p)\le 1$ for all~$p$. 

    \medskip
    
    Now let~$P\subset [0,1]$ be a $\delta$-fine cover (as mentioned we can choose $r_p = \delta(p)$). For each $p\in P$ find some $k$ with $\delta(p) > 2^{-k}$; by $\Sigma_1$ bounding, there is some~$k$ such that for all $p\in  P$, $\delta(p)> 2^{-k}$. Hence $\seq{U_n}_{n\le k}$ covers~$[0,1]$, as required. 
\end{proof}

\begin{remark} 
    The proof above may obscure the main intuition behind it. Consider a model in which $\WKL$ fails, witnessed by a ``cover'' $\seq{U_n}$ which is not really a cover: outside the model, there are points not covered by any~$U_n$, but there are none such inside it. Let $C = [0,1]\setminus \bigcup_n U_n$. This is an effectively closed set (relative to an oracle in the model), and the rough idea is to let $\delta(x) = d(x,C)$. Then no $x\in C$ can be covered by a $\delta$-fine partition. The fact that for any \emph{finite}~$P$, $[0,1]$ is not covered by $\bigcup_{p\in P}(p-\delta(p),p+\delta(p))$ reflects down to the original model. The function~$\delta$ is not really a gauge, but the model doesn't see this, as it thinks that~$C$ is empty. 

    The only difficulty is that $x\mapsto d(x,C)$, while continuous, is not computable relative to an oracle in the model; it is only lower semi-computable. In other words, the model does not contain a code for this function. The definition of~$\delta$ in the proof above gives a continuous lower bound to $d(x,C)$ with a code in the model. 
\end{remark}

\section{Baire class 1} % (fold)
\label{sec:baire_class_1}

A function is \emph{Baire class 1} if it is the pointwise limit of continuous functions. The Baire classes were introduced by Baire in his PhD thesis \cite{baireFonctionsVariablesReelles1899}, as a natural generalisation of the continuous functions. One motivation for Baire functions is that many functions arising in analysis are not continuous, such as step functions \cite{heavisideElectromagneticTheory1893}, Walsh functions \cite{walshClosedSetNormal1923}, or Dirichlet's function \cite{dirichletConvergenceSeriesTrigonometriques1829}. However, all such ``natural'' functions generally have low Baire class; for example, the derivative of any differentiable function is Baire class 1, as are functions arising from Fourier series \cite{kechrisClassificationBaireClass1990}.

The Baire classes have previously been studied with respect to computability~\cite{kuyperEffectiveGenericityDifferentiability2014, porterNotesComputableAnalysis2017}. In particular, Kuyper and Terwijn showed that a real number $x$ is 1-generic if and only if every \textit{effective} Baire 1 functions is continuous at $x$ \cite{kuyperEffectiveGenericityDifferentiability2014}. Indeed, the Turing jump function is in some sense a universal Baire class 1 function on Baire space. 

\smallskip

In second-order arithmetic, we can formalise this notion as follows:
\begin{definition} \label{def:Baire_class_1}
    A code for a \emph{Baire class 1} function is a sequence $\seq{f_n}$ of (codes of) continuous functions such that for all~$x$, $\seq{f_n(x)}$ is a Cauchy sequence.
\end{definition}
Note that we do not require the limit of this Cauchy sequence to exist; this would follow from arithmetic comprehension, but the definition still makes sense in $\RCA$. If $\seq{f_n}$ is a code in the model for a Baire class 1 function~$f$, the relation ``$f(x)\in B$'' for an open or closed ball~$B$ is definable, even if $f(x)$ is not an object in the model; for example, we say that $f(x)\in B(y,r)$ if for some $s<r$, for all but finitely many~$n$, $f_n(x)\in B(y,s)$. Thus, it is meaningful to say that $\seq{f_n}$ is a code for a gauge. 

\medskip

In this section we show that Cousin's lemma for Baire class 1 functions is equivalent to~$\ACA$. We start with:

\begin{lemma}[($\RCA$)] \label{lem:Sigma_2_preimages}
    If $f\colon X\to Y$ is Baire class 1, then for open sets $U\subseteq Y$, $f^{-1}[Y]$ is $F_\sigma$, uniformly. 
\end{lemma}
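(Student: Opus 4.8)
The plan is to exhibit, directly and uniformly, an $F_\sigma$ decomposition of $f^{-1}[U]$ phrased entirely in terms of the continuous approximants coding $f$, so that the values $f(x)$ -- which need not be objects of the model -- are never invoked as points. Fix a code $\seq{f_n}$ for the Baire class $1$ function and a standard code $U=\bigcup_j B_j$ for the open set, with $B_j = B(q_j,s_j)$, $q_j$ a basic point of $Y$ and $s_j$ a positive rational. Unwinding \cref{def:Baire_class_1}, ``$f(x)\in B_j$'' is equivalent to: there are a rational $t$ with $0<t<s_j$ and an $N\in\Nat$ such that $d(f_n(x),q_j)\le t$ for all $n\ge N$ (a real witness $s<s_j$ in the definition can always be replaced by a rational $t$ with $s<t<s_j$, and conversely). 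Hence $x\in f^{-1}[U]$ iff $f(x)\in B_j$ for some $j$, iff $x$ lies in
\[
    C_{j,t,N} \defeq \bigcap_{n\ge N} f_n^{-1}\!\left[\overline{B(q_j,t)}\right]
\]
for some $j\in\Nat$, some rational $t\in(0,s_j)$, and some $N\in\Nat$.

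Each $C_{j,t,N}$ is a countable intersection of preimages of a closed ball under continuous functions, hence closed: its complement is $\bigcup_{n\ge N} f_n^{-1}\bigl[\{y\in Y : d(y,q_j)>t\}\bigr]$, a countable union of preimages of canonically coded open subsets of $Y$ under the $f_n$. By the standard $\RCA$ manipulations of codes for continuous functions (preimages of open sets are open, uniformly), a code for this open set -- equivalently, a closed-set code for $C_{j,t,N}$ -- is obtained uniformly from $\seq{f_n}$, $q_j$, $t$ and $N$. Collecting these, via $\Delta^0_1$ comprehension, into a single sequence $\seq{C_m}$ indexed by natural numbers yields the required $F_\sigma$ code, and the whole construction is uniform in the codes for $f$ and for $U$ (and, if one wishes, in a sequence of such $U$).

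It then remains to check in $\RCA$ that $f^{-1}[U]=\bigcup_m C_m$, i.e.\ that the two unwound membership conditions agree; both inclusions are immediate from the equivalence in the first paragraph together with $f^{-1}[U]=\bigcup_j f^{-1}[B_j]$, using only that $\seq{f_n(x)}$ is Cauchy and that ``$f(x)\in B$'' is interpreted via \cref{def:Baire_class_1}: a tail of $\seq{f_n(x)}$ contained in $\overline{B(q_j,t)}$ forces $f(x)\in B_j\subseteq U$, while conversely $f(x)\in U$ places $f(x)$ in some $B_j$ with enough room that a tail of the approximants already lies inside a slightly smaller concentric closed ball.

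The mathematical content is light; the only point demanding care is the metamathematical one highlighted in \cref{sec:baire_class_1} -- that $f(x)$ may fail to be an object of the model -- which is exactly why we route everything through the $f_n$ and why \emph{closed} balls (rather than open ones) appear, ensuring that each $C_{j,t,N}$ is genuinely closed rather than merely $G_\delta$. No compactness, and no comprehension beyond $\Delta^0_1$, is needed.
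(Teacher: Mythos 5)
Your proof is correct and is essentially the paper's own argument, just written out in full: the paper likewise notes that preimages of closed balls under the continuous approximants are closed, that open balls are unions of closed balls, and records exactly your equivalence $f(x)\in B(y,r)\Leftrightarrow(\exists s<r)(\exists N)(\forall n\ge N)\,d(f_n(x),y)\le s$, leaving the uniform coding of the resulting $F_\sigma$ set implicit. Your added care about rational radii, the closed-ball trick, and uniformity of the codes is exactly the routine bookkeeping the paper omits.
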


\begin{proof}
    What this means: if $f = \seq{f_n}$ is a code for a Baire class 1 function, and $\seq{U_n}$ is a sequence of (codes of) open subsets of~$Y$, then there is a sequence $\seq{F_n}$ of (codes of) $F_\sigma$ subsets of~$X$ with 
    \[
        F_n = \left\{ x\in X \,:\,  f(x)\in U_n \right\},
    \]
    where again $f(x)\in U_n$ is a definable relation on~$x$ and~$n$. 

    \smallskip
    
    The lemma is well-known. The $f$-preimage of a closed ball by a continuous function is closed, and an open ball is the union of closed balls. Hence
     \[
     f(x)\in B(y,r) \,\,\Leftrightarrow\,\, (\exists s<r)(\exists n)(\forall m\ge n) \, d(f_n(x),q) \le s. \qedhere.
     \]
    
\end{proof}

\begin{remark} \label{rmk:Borel_class_2}
    In fact, classically, a function is Baire class 1 if and only if the pull back of open sets is $F_\s$. The other direction requires a little bit of work; it is like Shoenfield's limit lemma, but for arbitrary spaces requires some topological considerations. 

    In second-order arithmetic, to make sense of the other direction, we would need to define codes for functions which pull back open sets to~$F_\s$ sets, similar to how continuous functions are coded by collection of pairs indicating that the pull back of an open set contains some open set. For such a code, we can computably construct a Baire class 1 code for the function, but the argument seems to require $\Sigma^0_2$ induction. 

    However, even stating the consistency of such a ``Borel class 2'' code seems complicated, as the containment relation between $F_\s$ sets is $\Pi^1_1$ complete. 
\end{remark}

\begin{proposition}[($\ACA$)] \label{prop:ACA_implies_CL1}
    Cousin's lemma holds for Baire class 1 gauges.
\end{proposition}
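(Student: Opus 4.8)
The plan is to reprise the proof of \cref{prop:Cousin_for_Borel} almost line by line: the point is that for a Baire class~1 gauge, \cref{lem:Sigma_2_preimages} makes \emph{arithmetic} the sets which were $\Sigma^1_1$ (resp.\ $\Pi^1_1$) in the Borel case, so that $\ACA$ suffices in place of $\ATR+\Delta^1_2$-induction. We work in Cantor space via \cref{lem:Cantor_to_UI}; let $\delta=\seq{f_n}$ code a Baire class~1 gauge on~$2^\w$. First, under $\ACA$ the value $\delta(x)=\lim_n f_n(x)$ exists uniformly in~$x$: the cut $\{q\in\Q : q<\delta(x)\}$ is arithmetic in $\seq{f_n}$ and~$x$, since $q<\delta(x)$ holds exactly when $(\exists s\in\Q)(\exists N)(\forall m\ge N)(q<s \wedge f_m(x)>s)$, using that $\seq{f_n(x)}$ is Cauchy; similarly for the upper cut.

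Then, exactly as in the Borel case, put
\[
    G = \left\{ \s\in 2^{<\w} \,:\, (\exists x\in [\s])\,\,[\s]\subseteq B(x,\delta(x)) \right\}
    \qquad\text{and}\qquad
    T = \left\{ \s\in 2^{<\w} \,:\, (\forall \tau\preceq \s)\,\,\tau\notin G \right\}.
\]
The crucial improvement is that~$G$ is now arithmetic. Recall that $[\s]\subseteq B(x,\delta(x))$ iff $\delta(x)>2^{-|\s|+1}$, so $\{x : [\s]\subseteq B(x,\delta(x))\}$ is the $\delta$-preimage of an open ray, hence $F_\sigma$ by \cref{lem:Sigma_2_preimages}, say $\bigcup_i C^\s_i$ with each $C^\s_i$ closed, uniformly in $\s$ and~$i$. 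Thus $\s\in G$ iff $(\exists i)\,C^\s_i\cap[\s]\ne\emptyset$. Now each $C^\s_i\cap[\s]$ can be presented, effectively from the continuous codes $f_n$, as a $\Pi^0_1$ subclass of $2^\w$; and the nonemptiness of a $\Pi^0_1$ class --- equivalently, the infinitude of the associated binary tree --- is an arithmetic condition on its code. So $G$, and therefore also~$T$, is arithmetic, and both exist by arithmetic comprehension.

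Now we run the Borel argument with $\ACA$ in place of $\ATR$. The tree~$T$ has no path: a path~$y$ would satisfy $y\rest{n}\in G$ for any~$n$ with $2^{-n+1}<\delta(y)$ (taking $x=y$ in the definition of~$G$), contradicting $y\rest{n}\in T$. Being a binary tree with no path, $T$ is finite by~$\WKL$ (which follows from~$\ACA$), bounded in length by some~$N$. Taking immediate extensions of the leaves of~$T$ yields a finite antichain $R\subseteq G$ with $2^\w=\bigcup_{\s\in R}[\s]$. For each $\s\in R$ an arithmetic search locates an~$i$ with $C^\s_i\cap[\s]\ne\emptyset$, and $\ACA$ then produces a point $x_\s$ in that nonempty $\Pi^0_1$ class (e.g.\ its leftmost path); since $R$ is finite, doing this finitely often is unproblematic. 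Then $x_\s\in[\s]$ and $\delta(x_\s)>2^{-|\s|+1}$, so $[\s]\subseteq B(x_\s,\delta(x_\s))$; hence $P=\{x_\s : \s\in R\}$ together with $r_{x_\s}=\delta(x_\s)$ is a $\delta$-fine cover.

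The one step that is not a mechanical transcription is the claim that~$G$ is arithmetic; this rests on the standard facts that a closed subset of~$2^\w$ coming from a continuous function can be presented as an effective $\Pi^0_1$ class (with a harmless bit of care at the boundary between $\{f_n\ge s\}$ and $\{f_n>s\}$), and that the nonemptiness of a $\Pi^0_1$ class is arithmetic. Everything else is obtained from the proof of \cref{prop:Cousin_for_Borel} by replacing ``$\Sigma^1_1/\Pi^1_1$'' with ``arithmetic'', ``\cref{lem:tree_lemma}'' with ``$\WKL$'', ``$\Delta^1_2$-induction'' with ``arithmetic comprehension'', and ``$\Sigma^1_1$-choice'' with ``$R$ is finite''.
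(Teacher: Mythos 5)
Your proposal is correct and takes essentially the same route as the paper's proof: reduce to Cantor space, note that $\ACA$ gives $\delta(x)$ for every $x$, use \cref{lem:Sigma_2_preimages} to see that $G$ and $T$ are arithmetic (so exist by arithmetic comprehension), conclude via $\WKL$ that the path-free tree $T$ is finite, and extract witnesses $x_\s$ for the finitely many strings in $R$ as leftmost paths of nonempty $\Pi^0_1$ classes. The only difference is presentational: the paper isolates as explicit preliminary points that emptiness of $F_\sigma$ subsets of $2^\w$ is arithmetically decidable and that members of nonempty $F_\sigma$ sets can be chosen in $\ACA$, whereas you inline these observations into the verification that $G$ is arithmetic and into the final witness extraction.
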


\begin{proof}
    The argument of \cref{lem:Cantor_to_UI} holds for Baire class 1 functions, and so it suffices to work in Cantor space. $\delta\colon 2^\w\to \R^+$ be a Baire class 1 gauge. By $\ACA$, for all~$x$, $\delta(x)$ exists. 

    Now the main two points are:
    \begin{enumerate}
        \item In $\ACA$, we can tell which $F_\s$ subsets of~$2^\w$ are empty;
        \item In $\ACA$, given a sequence $\seq{H_n}$ of nonempty $F_\s$ subsets of~$2^\w$, there is a choice sequence $\seq{x_n}$ so that $x_n\in H_n$ for all~$n$. 
    \end{enumerate}
    The point is that whether a tree has a path is an arithmetic question, and so whether the union of a countable sequence of closed subsets of Cantor space is empty or not is also arithmetic. For~(2), using~$\ACA$ we can obtain a sequence $\seq{S_n}$ of binary trees, each of which has a path, such that $[S_n]\subseteq H_n$, and then apply $\WKL$ (or in $\ACA$, just take the leftmost path of each~$S_n$).

    \medskip
    
    We can therefore repeat the proof of \cref{prop:Cousin_for_Borel}. Let $\delta$ be a Baire class 1 gauge on Cantor space. We define the set~$G$ and the tree~$T$ in exactly the same way; $G$ and~$T$ exists since they are arithmetically definable: $\delta(x) < 2^{-|\s|+1}$ is~$\Sigma^0_2$ by~\cref{lem:Sigma_2_preimages}. The same argument shows that~$T$ cannot have a path, and so by~$\WKL$, $T$ is finite. As above, from~$T$ we obtain a finite subset~$R$ of~$G$ which covers all of Cantor space. By~(2), we can find a sequence $\seq{x_\s\,:\,\s\in R}$ as required. 
\end{proof}

In the other direction:

\begin{theorem}[($\RCA$)]\label{thm:clb1->aca}
    Cousin's lemma for Baire class 1 functions implies $\ACA$.
\end{theorem}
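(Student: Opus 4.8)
The plan is to reverse the usual proof that $\ACA$ follows from the existence of ranges of functions, by encoding an arbitrary one-to-one function $g\colon\N\to\N$ into a Baire class $1$ gauge on Cantor space (or on $[0,1]$; by \cref{lem:Cantor_to_UI} it suffices to work in $\CS$) whose $\delta$-fine covers compute the range of $g$. Recall that over $\RCA$, $\ACA$ is equivalent to the statement that the range of every injection $g\colon\N\to\N$ exists. So fix such a $g$; I want to build $\seq{f_n}$ coding a gauge $\delta$ so that from any $\delta$-fine cover one can uniformly decide, for each $m$, whether $m\in\operatorname{ran}(g)$.

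The idea is to make $\delta$ small near a sequence of special points $z_m\in\CS$, one for each $m$, in a way that is only ``activated'' if and when $m$ enters the range of $g$. Concretely, reserve disjoint clopen neighbourhoods; say work near the point $z_m = 0^m 1 0^\infty$ (or use an appropriate disjoint family of basic clopen sets $[\s_m]$). On $\CS\setminus\{z_m : m\in\N\}$ let $\delta$ be bounded below by a fixed positive constant, so the only obstruction to covering is at the points $z_m$. At $z_m$, define $\delta(z_m)$ by: if $g(s)=m$ for some $s$ (necessarily unique), then let $\delta(z_m) = 2^{-s-c}$ for a suitable constant $c$ making the ball $B(z_m,\delta(z_m))$ a \emph{proper} sub-basic-clopen set of $[\s_m]$; if $m\notin\operatorname{ran}(g)$, let $\delta(z_m)=$ the full radius of $[\s_m]$, i.e.\ make $[\s_m]\subseteq B(z_m,\delta(z_m))$. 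The function $\delta$ is Baire class $1$: take $f_n(z_m)$ to equal the ``large'' value as long as $m$ has not appeared in $g\rest n$, and drop to the small value once it does; at all other points $f_n$ is constant. Each $f_n$ is continuous (it is locally constant away from finitely many of the $z_m$, and one smooths near those finitely many points as in the usual constructions), and $\seq{f_n(x)}$ is Cauchy at every $x$ since it changes at most once. One must check, as the excerpt's \cref{def:Baire_class_1} allows, that we need not know $\lim_n f_n(z_m)$ inside the model — only the relations $\delta(z_m)>r$, which are definable.

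Now suppose $\RCA$ proves Cousin's lemma for Baire class $1$ gauges, and let $(P,\bar r_p)$ be a $\delta$-fine cover of $\CS$. For each $m$, since $z_m$ must be covered by some $B(p,r_p)$ with $r_p\le\delta(p)$, and since away from the $z_j$'s the clopen set $[\s_m]$ is ``shielded'' by the disjointness/size bookkeeping, the only way to cover the part of $[\s_m]$ near $z_m$ is either (i) with a ball centred at $z_m$ itself, forcing $r_{z_m}\le\delta(z_m)$, or (ii) with balls centred at other points of $[\s_m]$, which are available only if those points have enough room — and I will arrange the geometry so that full coverage of $[\s_m]$ by the finite set $P$ is possible exactly when $m\notin\operatorname{ran}(g)$, or else reveals the witness $s$ with $g(s)=m$. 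Either way, from the finite data $(P,\bar r_p)$ one reads off, for each $m$, whether $m\in\operatorname{ran}(g)$: check whether $z_m\in P$ and inspect $r_{z_m}$, or check which sub-balls of $[\s_m]$ occur in $P$. This computation is $\Delta^0_1$ in $(P,\bar r_p)$ and $g$, hence gives $\operatorname{ran}(g)$ by $\Delta^0_1$ comprehension, yielding $\ACA$.

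The main obstacle is getting the local geometry near each $z_m$ exactly right: I must ensure that \emph{before} $m$ enters $\operatorname{ran}(g)$ the gauge genuinely permits covering $[\s_m]$ with finitely many balls (so that $\delta$ really is a legitimate gauge and Cousin's lemma applies), while \emph{after} $m$ enters, the shrinkage at $z_m$ is reflected in any $\delta$-fine cover in a way the model can detect — all without the model being able to compute $\lim_n f_n$. This is the analogue of the ``$C$ looks empty from inside the model'' phenomenon in the remark following \cref{thm:cl->hb}: here the model cannot decide $\operatorname{ran}(g)$, so it cannot tell which $z_m$ are ``really'' singular, yet every \emph{concrete} finite cover it produces encodes that information. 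Pinning down the constants $c$ and the clopen bookkeeping, and verifying continuity of the $f_n$ near the finitely many active points, is the only real work; the rest is routine.
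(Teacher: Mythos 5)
Your plan is genuinely different from the paper's, and it has a gap that I believe is fatal to the approach as designed, not just a matter of ``pinning down constants.'' You place one special point $z_m$ for each $m\in\N$ inside disjoint clopen sets $[\s_m]$ and want every $\delta$-fine cover to interact with every $z_m$ closely enough to decide $m\in\operatorname{ran}(g)$. Compactness defeats this. The infinitely many points $z_m$ accumulate somewhere (with $z_m=0^m10^\infty$, at $z^*=0^\infty$). Since $\delta$ must be a genuine gauge --- otherwise Cousin's lemma does not apply and you get no cover to decode --- $\delta(z^*)$ is some positive number, so $B(z^*,\delta(z^*))$ is a basic clopen set $[0^k]$ containing $z_m$ for every $m\ge k$. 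A perfectly legal $\delta$-fine cover then consists of one ball centred at $z^*$ plus covers of the finitely many $[\s_m]$ with $m<k$, and it carries no information about $\operatorname{ran}(g)$ above $k$: a finite cover can only be \emph{forced} to visit finitely many of your coding sites. The natural repairs fail for the same reason. If you try to make $\delta$ near $z^*$ small enough that the radius $r_{z^*}$ bounds the witnesses for all the swallowed $m$ (e.g.\ $\delta(p)=\inf_m\max(d(p,z_m),a_m)$ with $a_m$ tied to the witness for $m$), you force $\delta$ to vanish at some accumulation point of the $z_m$, and $\delta$ is no longer a gauge; if you keep $\delta(z^*)$ bounded below, the swallowing recurs. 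The same obstruction blocks the single-special-point variant in which $z^*$ itself codes $\operatorname{ran}(g)$ and every cover must contain it: to force that, you need $\delta(p)\le d(p,z^*)$ for $p\ne z^*$ but $\delta(z^*)>0$, and no Baire class~1 code computable from $g$ can place that isolated spike at a point the model cannot locate. (Two smaller points: ``bounded below by a fixed positive constant off the $z_m$'' already lets one ball centred at a non-special point swallow everything; and for transferring to $\CS$ you want \cref{lem:UI_to_Cantor}, not \cref{lem:Cantor_to_UI}.)

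The paper's proof sidesteps all of this by arguing by contradiction and concentrating the coding at a single point that is \emph{absent from the model}. If $\ACA$ fails, there is a bounded increasing Cauchy sequence $\seq{z_n}$ with no limit in the model. The code $\delta_n(x)=\abs{x-z_n}$ is pointwise Cauchy, hence a Baire class~1 code; the function it codes is a gauge because every $x$ \emph{in the model} is bounded away from the tail of $\seq{z_n}$ --- the only point where $\delta$ ``would'' vanish is the missing limit. Then no $\delta$-fine tagged partition can exist: the partition interval $[x_i,x_{i+1}]$ that contains the tail of $\seq{z_n}$ has its tag $\xi_i$ at positive distance from that tail, which forces $\delta(\xi_i)<x_{i+1}-x_i$. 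So the conclusion is that no cover exists at all, rather than that every cover computes $\operatorname{ran}(g)$. This is exactly the ``the model thinks $C$ is empty'' phenomenon you cite from the remark after \cref{thm:cl->hb}, but note that it cuts against your plan: the singular point must be invisible to the model, which is only available under the assumption $\neg\ACA$, i.e.\ in a proof by contradiction. If you want to keep your injection $g$ explicit, run the paper's argument with $z_n=\sum_{s<n}2^{-g(s)}$ (suitably normalised), whose limit exists in the model if and only if $\operatorname{ran}(g)$ does.
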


The idea of the proof is as follows. Supposing we have a Cauchy sequence $\seq{z_n}$ with no limit. The sequence of functions $x \mapsto \abs{x-z_n}$ determine a Baire class 1 function~$\delta$, which is a gauge since $\seq{z_n}$ has no limit. But there cannot be any $\delta$-fine partition, since no such partition $P$ can cover the gap where $\lim z_n$ should be. Here are the details.

\begin{proof}[Proof of Theorem \ref{thm:clb1->aca}]
    If $\ACA$ fails then there is an increasing Cauchy sequence $\seq{z_n}_{n \in \N}$ of real numbers in the unit interval that has no limit. For each $n \in \N$, define 
     \[
     \delta_n(x) = \abs{x-z_n}.
     \]
     This is a sequence of continuous functions. This sequence is pointwise Cauchy: for all~$x$,~$n$ and~$m$, 
         \begin{equation*}
        \abs{\delta_m(x) - \delta_n(x)}\ = \ \big|\abs{x-z_m} - \abs{x-z_n} \big| \ \leq\ \abs{z_n - z_m};
    \end{equation*}
    so we use the fact that $\seq{z_n}$ is Cauchy. Note that in the ``real world'', $\seq{\delta_n}$ converges uniformly to the continuous function $x\mapsto  |x-z^*|$ where $z^* = \lim_n z_n$, but since $z^*$ does not exist in the model, this function does not have a code in the model. However in $\RCA$ we have just shown that $\seq{\delta_n}$ is a code for a Baire class 1 function, which we call~$\delta$. 

    \smallskip
    
    We claim that~$\delta$ is a gauge. For all $x\in [0,1]$, since $x\ne \lim_n z_n$, and since $\seq{z_n}$ is Cauchy, there is some $\epsilon>0$ such that for all but finitely many~$n$, $|x-z_n|\ge \epsilon$. Then $\delta(x) \ge \epsilon$.

    Now by \cref{lem:delta_fine_partition_and_cover} and Cousin's lemma applied to the gaugue $\tfrac12\delta$, there is a $\delta$-fine tagged partition $0=x_0 \le \xi_0 \le x_1 \le \xi_1 \le x_2 \le \dots \le \xi_{n-1} \le x_n = 1$; recall that this means that $x_{i+1}-x_i \le \delta(x_i)$ for all $i<n$.

    We derive a contradiction. By $\Sigma^0_1$-induction, there is a least~$i$ such that for all~$n$, $z_n \le x_{i+1}$. Since $\seq{z_n}$ is increasing, for all but finitely many~$n$, $z_n\in [x_{i},x_{i+1}]$. Since none of $x_{i}$, $\xi_{i}$ or $x_{i+1}$ are the limit of $\seq{z_n}$, there is some $\epsilon>0$ such that one of two happens:
    \begin{itemize}
        \item for all but finitely many~$n$, $z_n \in [x_{i}+\epsilon,\xi_{i}-\epsilon]$; or
        \item for all but finitely many~$n$, $z_n \in [\xi_{i}+\epsilon,x_{i+1}-\epsilon]$.
    \end{itemize}
    Without loss of generality, assume the former. Then for all but finitely many~$n$, 
     \[
     \delta_n(\xi_{i}) = \xi_i - z_n \le (\xi_i -x_i)-\epsilon \le (x_{i+1}-x_{i})-\epsilon,
     \]
    whence $\delta(\xi_{i})< x_{i+1}-x_{i}$, contradcting our assumption. The argument in the other case is the same. 
\end{proof}

% section baire_class_1 (end)

\section{Higher Baire classes} % (fold)
\label{sec:higher_baire_classes}

Baire class 1 functions aren't closed under taking pointwise limits. Indeed, by iterating the operation of taking pointwise limits, we obtain a transfinite hierarchy of functions, which exhausts all Borel functions. 

In second-order arithmetic, recall that in $\RCA$, for a Baire class 1 function~$f$, the relation $f(x)\in B$ for open or closed balls~$B$ is definable even if $f(x)$ does not exist. We can therefore iterate. By (external) induction on standard~$n$ we define:

\begin{definition} \label{def:Baire_class_n}
    For each $n\ge 1$, a code for a Baire class~$(n+1)$ function~$f$ is a sequence $\seq{f_n}$ of codes for Baire class~$n$ functions such that for all~$x$, $\seq{f_n(x)}$ is Cauchy, in the sense that for all~$\epsilon>0$ there is an open ball~$B$ of radius $<\epsilon$ such that for all but finitely many~$n$, $f_n(x)\in B$. 

    If~$\seq{f_n}$ is such a code, then for any open ball $B = B(y,r)$, we say that $f(x)\in B$ if for some $s<r$, for all but finitely many~$n$, $f_n(x)\in B(y,s)$. 
\end{definition}

Of course by taking constant sequences, we see that every Baire class~$n$ function is also Baire class~$n+1$, so the classes are increasing. 

\smallskip

$\ACA$ implies, for each~$n$, that if $f$ is Baire class~$n$ then $f(x)$ exists for all~$x$. Similarly to \cref{lem:Sigma_2_preimages}, the inverse images of open sets by Baire class~$n$ functions are $\Sigma^0_{1+n}$. We can further extend the definition to Baire class~$\alpha$ functions for ordinals~$\alpha$; a code in this case is an $\alpha$-ranked well-founded tree, where each non-leaf has full splitting, the leaves are labeled by continuous functions, and each non-leaf node represents the pointwise limit of the functions represented by its children. The relation $f(x)\in U$ is then defined by transfinite recursion on the rank of a node, and so it makes most sense to use $\ATR$ as a base system for this kind of development; we do not pursue it here. We remark that $\ATR$ implies that every Borel function is Baire class~$\alpha$ for some~$\alpha$. 

\medskip

Now the proof of \cref{prop:ACA_implies_CL1} cannot be replicated for Baire class~$n$ functions for any for $n>1$. While $\ACA$ suffices to determine if a given $\Sigma^0_2$ subset of Cantor space is empty or not, it is $\Pi^1_1$-complete to determine whether a given~$\Pi^0_2$ set is empty. And indeed we show that Cousin's lemma for Baire class 2 functions is much stronger than $\ACA$: it implies $\ATR$. 

It would again be more convenient to work in Cantor space, and so we need a converse of \cref{lem:Cantor_to_UI}.

\begin{lemma}[($\RCA$)] \label{lem:UI_to_Cantor}
    Cousin's lemma for Baire class 2 functions on unit interval implies Cousin's lemma for Baire class 2 functions on Cantor space. 
\end{lemma}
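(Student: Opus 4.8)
The plan is to transport the gauge through an \emph{embedding} of Cantor space into the unit interval, rather than through the surjection $\vphi$ of \cref{lem:Cantor_to_UI}. Fix the ternary (middle-thirds) Cantor set $C\subseteq[0,1]$ together with the homeomorphism $\iota\colon 2^\w\to C$ given by $\iota(w)=\sum_n 2w(n)3^{-n-1}$; the point of using an injective map is that every open ball of $[0,1]$ meets $C$ in a set which lifts, under $\iota^{-1}$, to a clopen subset of $2^\w$, so there is no ``gluing'' of Cantor points to contend with. The key metric comparison is: if $z\neq w$ and $j$ is least with $z(j)\neq w(j)$, then $d(z,w)=2^{-j}$ while $3^{-j-1}\le\abs{\iota(z)-\iota(w)}\le 3^{-j}$; writing $\alpha=\log_2 3$ (so that $3^{-j}=(2^{-j})^{\alpha}$), the left inequality reads $\tfrac13 d(z,w)^{\alpha}\le\abs{\iota(z)-\iota(w)}$, i.e.\ $d(z,w)\le(3\abs{\iota(z)-\iota(w)})^{1/\alpha}$, and $t\mapsto(3t)^{1/\alpha}$ is continuous and increasing with value $0$ at $0$.

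Given a Baire class $2$ gauge $\delta=\seq{\seq{\delta_{m,k}}_k}_m$ on $2^\w$ (each $\delta_{m,k}$ continuous), I would build a Baire class $2$ gauge $\hat\delta$ on $[0,1]$ as follows. For each $(m,k)$, extend $\delta_{m,k}\circ\iota^{-1}\colon C\to\R$ to a continuous function $\widetilde{\delta_{m,k}}\colon[0,1]\to\R$ by linear interpolation across the (effectively enumerable) gaps of $C$; this is explicit and uniform in the codes, hence carried out in $\RCA$, and the nested Cauchy conditions witnessing that $\delta$ is a Baire class $2$ code propagate through the interpolation, yielding a Baire class $2$ code $\tilde\delta$ for a positive function on $[0,1]$ with $\tilde\delta(\iota(w))=\delta(w)$ for all $w$. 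Put $\mu=\tfrac13(\max(0,\tilde\delta))^{\alpha}$, obtained by applying the continuous map $t\mapsto\tfrac13(\max(0,t))^{\alpha}$ to $\tilde\delta$, so $\mu$ is a positive Baire class $2$ function with $\mu(\iota(w))=\tfrac13\delta(w)^{\alpha}$, and set
\[
    \hat\delta \;=\; \mathbb 1_C\cdot\mu \;+\; \operatorname{dist}(\cdot,C).
\]
Since $\mathbb 1_C=\lim_k\max(0,1-k\operatorname{dist}(\cdot,C))$ is a Baire class $1$ code, the product $\mathbb 1_C\cdot\mu$ is a Baire class $2$ code (equal to $0$ off $C$), so $\hat\delta$ is a Baire class $2$ code for a function on $[0,1]$. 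It is a gauge: for $x\in C$, $\hat\delta(x)=\mu(x)=\tfrac13\delta(\iota^{-1}(x))^{\alpha}>0$ since $\delta$ is a gauge, while for $x\notin C$, $\hat\delta(x)=\operatorname{dist}(x,C)>0$; these cases are exhaustive, so ``$\hat\delta(x)>0$'' is provable in $\RCA$ even though $\hat\delta(x)$ need not exist as an object.

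Now I would apply Cousin's lemma for Baire class $2$ functions on $[0,1]$ to get a $\hat\delta$-fine cover $(P,\seq{r_p})$, and extract a $\delta$-fine cover of $2^\w$ from it. First, no $y\in C$ lies in a ball $B(p,r_p)$ with $p\notin C$: otherwise $\abs{y-p}<r_p\le\hat\delta(p)=\operatorname{dist}(p,C)\le\abs{y-p}$, which is absurd; hence $C\subseteq\bigcup\{B(p,r_p):p\in P\cap C\}$. Set $\tilde P=\iota^{-1}[P\cap C]$ and, for $w=\iota^{-1}(p)$ with $p\in P\cap C$, let $\tilde r_w=(3r_p)^{1/\alpha}>0$. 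Then $\tilde r_w\le\delta(w)$: from $r_p\le\hat\delta(\iota(w))=\tfrac13\delta(w)^{\alpha}$ we obtain $3r_p\le\delta(w)^{\alpha}$, hence $(3r_p)^{1/\alpha}\le\delta(w)$ (a relation about $\delta(w)$ provable in $\RCA$ from the codes). And $(\tilde P,\seq{\tilde r_w})$ covers $2^\w$: given $z\in2^\w$, choose $p=\iota(w)\in P\cap C$ with $\iota(z)\in B(p,r_p)$; then by the metric comparison $d(z,w)\le(3\abs{\iota(z)-\iota(w)})^{1/\alpha}<(3r_p)^{1/\alpha}=\tilde r_w$, so $z\in B(w,\tilde r_w)$. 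Thus $(\tilde P,\seq{\tilde r_w})$ is a $\delta$-fine cover of Cantor space.

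The step I expect to be the main obstacle is the construction of $\hat\delta$: producing a genuine Baire class $2$ \emph{code} for it inside $\RCA$ — performing the interpolation uniformly and checking that all the nested Cauchy conditions survive it, while handling correctly that individual approximants $\delta_{m,k}$ need not be positive and that the values of $\delta$ and $\hat\delta$ may fail to exist in the model — together with verifying that $\hat\delta$ counts as a gauge in the coded sense. Once $\hat\delta$ is in place, the metric estimates and the extraction of the Cantor-space cover are routine.
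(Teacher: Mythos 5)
Your proposal is correct and follows essentially the same route as the paper's proof: embed $2^\w$ into $[0,1]$ via the ternary Cantor set, transfer the gauge with radii converted through the exponent $\log_2 3$ (and back via $\log_3 2$), use $\operatorname{dist}(\cdot,C)$ off $C$ so that points of $C$ cannot be covered by balls centred outside $C$, and pull the resulting cover back to Cantor space. Your extra bookkeeping (the factor $\tfrac13$ in the metric comparison, and the explicit construction of the Baire class 2 code for $\hat\delta$ by extending the approximants and multiplying by the $1_C$ code) just fills in details the paper leaves implicit, and indeed corrects the constant in the paper's stated inequality $|\psi(x)-\psi(y)|\ge 3^{\log_2 d(x,y)}$.
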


Of course there is nothing special for the case $n=2$, the lemma holds for all Baire classes.

\begin{proof}
    We use the standard embedding of Cantor space into the unit interval: for $x\in 2^\w$ let
    \[
        \psi(x) = \sum_n 2x(n)3^{-n-1},
    \]
    and let $C = \psi[2^\w]$ be the ternary Cantor set. We use the following facts: $\psi$ is continuous; the function $z\mapsto d(z,C)$ is computable on $[0,1]$; and: for $x,y\in 2^\w$, if $d(x,y)= 2^{-n}$ then $|\psi(x)-\psi(y)| \ge 3^{-n}$, i.e.,
    \[
        |\psi(x)-\psi(y)|  \ge 3^{\log_2 d(x,y)}
    \]
    For all $x,y\in 2^\w$. 

    \smallskip
    
    Let $\delta$ be a Baire class 2 gauge on Cantor space. Define $\hat \delta \colon [0,1]\to \R$ by letting, for $z\in [0,1]$,
    \[
        \hat\delta(z) = \begin{cases*}
            3^{\log_2 \psi^{-1}(z)}, & if $z\in C$; and \\
            d(x,C), & otherwise. 
        \end{cases*}
    \]
    Then $\hat \delta$ is positive on~$[0,1]$, and it is Baire class 2; we use the fact that~$1_C$ (the characteristic function of~$C$) is Baire class~1, indeed with a very nice, uniformly computable approximation $\seq{g_n}$ of piecewise linear functions with $g_n(z)=1$ for all~$n$ when $z\in C$, and $g_n(z)=0$ for all but finitely many~$n$ when $z\notin C$. 

    \smallskip
    
    Suppose that $(P,\bar r_p)$ is a $\hat \delta$-fine cover. Define $Q = \left\{ x\in 2^\w \,:\,  \psi(x)\in P \right\}$ and for $x\in Q$ let $s_x = 2^{\log_3 r_{\psi(x)}}$. Then $(Q,\bar s_x)$ is a $\delta$-fine cover. The main point is that $z\in C$ cannot be covered by any $p\in P\setminus Q$, as for such~$p$ we have $\delta(p) = d(p,C) \le |p-z|$. 
\end{proof}

Before we give the reversal, we motivate our technique by showing something weaker: that $\ACA$ does not imply Cousin's lemma for Baire class 2 functions. We will show that every $\w$-model of $\ACA$ must contain $0^{(\w)}$ (in fact $0^{(\alpha)}$ for any computable ordinal~$\alpha$), in particular Cousin's lemma for Baire class 2 functions fails in the model consisting of the arithmetic sets. 

We review some definitions. The Turing jump $x'$ of $x\in 2^\w$ is the complete $\Sigma^0_1(x)$ set, identified with an element of $2^\w$. As mentioned above, the function $x\mapsto x'$ is Baire class 1, but more importantly, its graph (the relation $y = x'$) is $\Pi^0_2$. We will use the following well-known fact, which is provable in $\ACA$:

\begin{lemma}[($\ACA$)] \label{lem:double_jump_is_Baire_class_2}
    Let $f\colon 2^\w\to \Nat$ be a function. The following are equivalent:
    \begin{enumerate}
        \item[(1)] $f$ is $\Delta^0_3$-definable;
        \item[(2)] There is a partial computable function $\vphi\colon 2^\w\to \Nat$ such that for all~$x$, $f(x) = \vphi(x'')$; 
        \item[(3)] $f$ is Baire class 2. 
    \end{enumerate}
\end{lemma}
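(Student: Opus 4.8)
The plan is to prove the equivalence $(1)\Leftrightarrow(2)\Leftrightarrow(3)$ in $\ACA$ by a cycle, say $(1)\Rightarrow(3)\Rightarrow(2)\Rightarrow(1)$, with most of the content concentrated in the reduction to double-jump computability. The overarching idea is that each ``Baire class $1$'' layer corresponds to a single Turing jump, via the effective version of the Shoenfield/Tarski--Kuratowski calculus: a $\Sigma^0_{1}(x')$-type fact about $f(x)$ reduces to a computable fact about $x''$, and iterating the bookkeeping over two levels gives exactly the $\Delta^0_3$ / $x''$ correspondence. Since $f$ has range in $\Nat$, ``$f(x)=k$'' is a $\Delta^0_3$ condition in clause (1), which makes all three formulations about a genuinely computable predicate once we have relativised to the appropriate jump.

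First I would do $(1)\Rightarrow(2)$: if the graph $\{(x,k): f(x)=k\}$ is $\Delta^0_3$, then it is both $\Sigma^0_3$ and $\Pi^0_3$, hence by the relativised form of Post's theorem (provable in $\ACA$, which proves the existence of all arithmetic sets and of $x\mapsto x''$) each of these is $\Sigma^0_1(x'')$, respectively $\Pi^0_1(x'')$ uniformly in $x$. So there is a single index $e$ with $\vphi^{x''}_e$ total and $\vphi^{x''}_e(x)=f(x)$ for all $x$; absorbing the oracle machinery into a fixed partial computable $\vphi\colon 2^\w\to\Nat$ (taking $x''$ as its honest argument) gives (2). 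The only care needed is uniformity: $\ACA$ gives us the sequence $\langle x''\rangle_x$ as a genuine second-order object, so the ``for all $x$'' is unproblematic.

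Next, $(2)\Rightarrow(3)$: given $\vphi$ with $f(x)=\vphi(x'')$, I would build a double sequence of continuous functions approximating $f$. Since the relation $y=x'$ is $\Pi^0_2$ --- and this is noted in the excerpt just before the lemma --- and $\Pi^0_2$ relations are pointwise limits (in a suitable effective sense) of $\Sigma^0_1$, hence of continuous, stages, the map $x\mapsto x'$ is Baire class $1$ with an explicit approximation built from the use of the enumeration of $x'$; squaring this, $x\mapsto x''$ is the pointwise limit of Baire class $1$ functions, i.e. Baire class $2$. Then $f = \vphi\circ(x\mapsto x'')$: one checks that composing a partial computable $\vphi$ with a Baire class $2$ approximation yields a Baire class $2$ code for $f$, where at the base level one uses that $\vphi$ is continuous on its domain and that the domain contains $x''$ for every $x$. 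I would phrase this as: take the Baire class $1$ approximation $\langle h_n\rangle$ to $x\mapsto x''$ with $h_n$ continuous, then $\langle \vphi\circ h_n\rangle$ (suitably patched where $\vphi$ is undefined, using that $f(x)$ itself is always defined) is the required Baire class $1$ sequence whose pointwise limit is $f$, giving a Baire class $2$ code. This step carries the bulk of the ``topological'' bookkeeping.

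Finally $(3)\Rightarrow(1)$: this is the direct generalisation of \cref{lem:Sigma_2_preimages}. If $f=\langle f_n\rangle$ with each $f_n$ a Baire class $1$ code, then ``$f(x)=k$'' unwinds, by the definition of Baire class $2$ codes, to a Boolean combination of statements of the form ``for all but finitely many $n$, $f_n(x)\in B$'', each of which --- since $f_n$ is Baire class $1$ --- is $\Sigma^0_2(x)$ by \cref{lem:Sigma_2_preimages}. An ``eventually'' quantifier over such a predicate is $\Sigma^0_3$, and because $f$ is total with range in $\Nat$ the complement ``$f(x)\ne k$'', equivalently $\bigvee_{j\ne k} f(x)=j$, is also $\Sigma^0_3$; hence the graph is $\Delta^0_3$, uniformly. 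I expect the main obstacle to be purely a matter of care rather than depth: making the uniformity and the ``patching of $\vphi$ outside its domain'' in $(2)\Rightarrow(3)$ precise, and confirming that every oracle-jump manipulation invoked (Post's theorem, the sequence $\langle x''\rangle$, the $\Pi^0_2$-ness of the jump graph) is available in $\ACA$ and not merely in a stronger system; all of these are standard, so I would cite \cite{simpsonSubsystemsSecondOrder2009} for the arithmetical hierarchy and jump facts and keep the write-up short.
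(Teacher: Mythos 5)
The paper states this lemma as ``well-known'' and gives no proof at all, so there is no argument of the authors' to compare yours against; your cycle (as actually executed, $(1)\Rightarrow(2)\Rightarrow(3)\Rightarrow(1)$, not the $(1)\Rightarrow(3)\Rightarrow(2)\Rightarrow(1)$ you announce) is the standard route. The $(1)\Rightarrow(2)$ step via relativised Post's theorem is fine, and $(2)\Rightarrow(3)$ is right in outline, modulo the totalisation of $\vphi$ that you correctly flag and modulo a slip in calling the approximants $h_n$ of $x\mapsto x''$ ``continuous'': they are Baire class~1, not continuous, which is exactly why the composite is Baire class~2 rather than~1.

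There is, however, a genuine error in $(3)\Rightarrow(1)$: the step ``an eventually quantifier over a $\Sigma^0_2$ predicate is $\Sigma^0_3$'' is false. ``For all but finitely many $n$'' is $\exists N\,\forall n\ge N$, and a universal number quantifier over a $\Sigma^0_2$ matrix gives $\Pi^0_3$, so the whole expression is $\Sigma^0_4$; as written your computation only shows the graph of $f$ is $\Delta^0_4$, which is not enough (a $\Delta^0_4$ function need not be computable from $x''$). The repair is the same device already used in the paper's \cref{lem:Sigma_2_preimages} one level down: express membership of $f(x)$ in an open ball via the \emph{closed} balls it contains. For a Baire class~1 code $\seq{f_m}$, the relation ``$d(f(x),y)\le s$'' is $\Pi^0_2$, since by the Cauchy condition it is equivalent to: for every rational $s'>s$ there are infinitely many $m$ with $d(f_m(x),y)<s'$. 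Then ``for all but finitely many $n$, [$\Pi^0_2$]'' is genuinely $\Sigma^0_3$, and since $f$ is total and $\Nat$-valued, ``$f(x)\ne k$'' is $\bigvee_{j\ne k}f(x)=j$ and hence also $\Sigma^0_3$, giving $\Delta^0_3$. Without the closed-ball step the direction $(3)\Rightarrow(1)$ does not go through.
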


As a result we get:

\begin{lemma}[($\RCA$)] \label{lem:double_jump_and_Cousin}
    Suppose that $f\colon 2^\w\to \Nat$ is $\Delta^0_3$. Suppose that Cousin's lemma for Baire class 2 functions holds. Then there is a finite $P\subset 2^\w$ such that 
    \[
        2^\w = \bigcup_{p\in P} [p\rest{f(p)}].
    \]
\end{lemma}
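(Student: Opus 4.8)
The plan is to turn the $\Delta^0_3$ function $f$ into a Baire class 2 gauge on Cantor space and then invoke Cousin's lemma (via \cref{lem:UI_to_Cantor}, so that it is enough to produce such a gauge and deduce the cover on Cantor space directly). Given $x \in 2^\w$, set $\delta(x) = 2^{-f(x)+1}$, so that $[x\rest{f(x)}] \subseteq B(x,\delta(x))$ and, conversely, $B(x,\delta(x))$ is contained in $[x\rest{f(x)-1}]$. Since $f$ takes values in $\Nat$, $\delta$ is everywhere positive, so it is genuinely a gauge — no convergence worries here, unlike the situation in \cref{def:Baire_class_1}. The first key step is to check that $\delta$ is Baire class 2 in the sense of \cref{def:Baire_class_n}: by \cref{lem:double_jump_is_Baire_class_2}, $f$ agrees with $x \mapsto \vphi(x'')$ for a partial computable $\vphi$, equivalently $f$ is $\Delta^0_3$; since $x\mapsto 2^{-f(x)+1}$ is obtained from $f$ by a computable rule on the values, it too is $\Delta^0_3$, hence Baire class 2 by the same lemma applied (componentwise, or directly) — I would phrase this as: the relations $\delta(x) < q$ and $\delta(x) > q$ for rational $q$ are $\Sigma^0_3$, which is exactly what is needed to exhibit a Baire class 2 code, following the construction sketched after \cref{def:Baire_class_n}.

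The second step is to apply Cousin's lemma for Baire class 2 functions to $\delta$ (on Cantor space, using \cref{lem:UI_to_Cantor} if one prefers to have stated Cousin's lemma on $[0,1]$; but since the whole section works on Cantor space I would just apply the Cantor-space form directly). This yields a finite set $P' \subseteq 2^\w$ and radii $r_p \le \delta(p)$ with $2^\w = \bigcup_{p \in P'} B(p, r_p)$. Since $r_p \le \delta(p) = 2^{-f(p)+1}$, every ball $B(p,r_p)$ in Cantor space is contained in $[p\rest{f(p)}]$ (a ball of radius $\le 2^{-k+1}$ around a point of $2^\w$ lies inside the cylinder determined by the first $k$ bits of that point). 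Therefore $2^\w = \bigcup_{p\in P'} [p\rest{f(p)}]$, and taking $P = P'$ finishes the argument. This step is essentially bookkeeping once the gauge is in place.

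The main obstacle — and the only place that needs care — is the verification that $\delta$ genuinely has a Baire class 2 \emph{code} in $\RCA$, not merely that its value-relations are $\Sigma^0_3$ externally. One has only that $f$ is $\Delta^0_3$, i.e., given by $\Sigma^0_3$ and $\Pi^0_3$ formulas; to produce an actual sequence $\seq{f_n}$ of Baire class 1 codes with $\seq{f_n(x)}$ Cauchy and limit-behaviour matching $\delta$, I would unwind \cref{lem:double_jump_is_Baire_class_2} (which gives the code via the $x \mapsto \vphi(x'')$ representation and the fact that the double-jump graph is $\Pi^0_2$). Concretely: $x''$ is approximated by a Baire class 1 procedure (iterating Shoenfield's limit lemma twice), and $\vphi$ is computable, so composing and then applying the computable map $k \mapsto 2^{-k+1}$ produces the required Baire class 2 code; this composition is exactly the sort of manipulation the paper has already committed to being legitimate in $\RCA$ (see the discussion following \cref{def:Baire_class_n}). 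I do not expect any induction beyond what is implicit in that discussion, so the statement should hold over $\RCA$ with no extra hypotheses — consistent with its being labelled $(\RCA)$.
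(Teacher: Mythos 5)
Your proposal is correct and follows essentially the same route as the paper: define the gauge $\delta(x)=2^{-f(x)}$ (your extra factor of $2$ is harmless), invoke \cref{lem:double_jump_is_Baire_class_2} to get a Baire class 2 code, apply Cousin's lemma on Cantor space, and observe that each ball $B(p,r_p)$ with $r_p\le\delta(p)$ sits inside the cylinder $[p\rest{f(p)}]$. The only point the paper makes that you leave implicit is that \cref{lem:double_jump_is_Baire_class_2} is stated over $\ACA$, which is available here because the hypothesis (Cousin's lemma for Baire class 2 gauges) already implies $\ACA$ via \cref{thm:clb1->aca}.
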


\begin{proof}
    Define $\delta(x) = 2^{-f(x)}$. Then $\delta$ is $\Delta^0_3$. By \cref{lem:double_jump_is_Baire_class_2} (note that we already know that Cousin's lemma for Baire class 2 functions implies $\ACA$), $\delta$ is a Baire class 2 gauge. A $\delta$-fine cover $P\subset 2^\w$ is as required. 
\end{proof}

Recall the definition of a transfinite iteration of the Turing jump. Suppose that $\alpha$ is a linear ordering of a subset of~$\Nat$, which will usually be well-founded. An iteration of the Turing jump along~$\alpha$ is a set $H\subseteq \alpha\times \Nat$ satisfying, for all $\beta<\alpha$, 
\[
    H^{[\beta]} = \left(H^{[<\beta]}  \right)',
\]
where $H^{[\beta]} = \left\{ k \,:\,  (\beta,k)\in H \right\}$ and $H^{[<\beta]} = \left\{ (\gamma,k)\in H \,:\,  \gamma<\beta \right\}$. Again note that we view the domain of~$\alpha$ as a subset of~$\Nat$, so both $H^{[\beta]}$ and $H^{[<\beta]}$ are identified with elements of Cantor space, and so taking the Turing jump makes sense. 

If~$\alpha$ is indeed an ordinal (is well-founded), then $\ACA$ implies there is at most one iteration of the Turing jump along~$\alpha$, and if~$H$ is such, we write $H = 0^{(\alpha)}$. When this is relativised to an oracle~$x$, we write $H = x^{(\alpha)}$. The relation ``$H$ is an iteration of the Turing jump along~$\alpha$'' is $\Pi^0_2$. $\ATR$ is equivalent to the statement that for all ordinals~$\alpha$ and all~$x$, $x^{(\alpha)}$ exists. 

\medskip

Now let $\+M$ be an $\w$-model of~$\ACA$, let $\alpha$ be a computable ordinal, and suppose that $0^{(\w)}\notin \+M$; we show that Cousin's lemma for Baire class 2 functions fails in~$\+M$. Let $X\in 2^\w$. If $X\ne 0^{(\w)}$, there is some $n<\w$ such that 
\[
    X^{[n]} \ne \big(X^{[<n]}\big)';
\]
let $n(X)$ be the least such~$n$. The function $X\mapsto n(X)$ (as well as its domain) is $\Delta^0_3$. Further, for such~$X$, we let $k(X)$ be the least~$k$ such that 
\[
    X^{[n(X)]}(k) \ne \big(X^{[<n(X)]}\big)'(k);
\]
the function $X\mapsto k(X)$ is $\Delta^0_3$ as well. By their definition, for all $X\ne 0^{(\w)}$, 
\[
    X(n(X),k(X))\ne 0^{(\w)}(n(X),k(X)), 
\]
as $X^{[<n(X)]} = (0^{(\w)})^{[<n(X)]}$. Hence, we let $f(X) = \seq{n(X),k(X)}+1$; for all $X\ne 0^{(\w)}$, 
\[
    0^{(\w)} \notin [X\rest{f(X)}]. 
\]
Since $0^{(\w)}\notin \+M$, $f$ is total on $2^\w\cap \+M$, and since~$\+M$ is an $\w$-model, the same $\Delta^0_3$ definition holds in~$\+M$, so $\+M$ believes that $f$ is a Baire class 2 gauge on Cantor space. Let $P\subset 2^\w\cap \+M$ be finite. Since $0^{(\w)}\notin P$, we have 
\[
    0^{(\w)} \notin \bigcup_{X\in P} [X\rest{f(X)}],
\]
and so 
\[
    2^\w \ne \bigcup_{X\in P} [X\rest{f(X)}].
\]
But since~$P$ is finite, the latter fact is absolute for~$\+M$. Hence \cref{lem:double_jump_and_Cousin} fails in~$\+M$. 

\smallskip

Exactly the same argument shows that for any computable ordinal~$\alpha$, $0^{(\alpha)}$ is an element of every $\w$-model of Cousin's lemma for Baire class 2 functions. The more general argument below builds on this technique.

\begin{theorem}[($\RCA$)] \label{thm:main}
    Cousin's lemma for Baire class 2 functions implies $\ATR$. 
\end{theorem}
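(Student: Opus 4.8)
The strategy is to prove $\ATR$ by establishing its equivalent form: for every countable well-ordering $\alpha$ and every set $x$, the iterated Turing jump $x^{(\alpha)}$ exists. The warm-up argument already in the excerpt handles the case $\alpha = \omega$ (indeed any fixed computable ordinal) by defining a $\Delta^0_3$ gauge $f$ on $2^\omega$ that ``points toward'' $0^{(\alpha)}$: for $X \ne 0^{(\alpha)}$, $f(X)$ is a length large enough that $0^{(\alpha)} \notin [X\rest{f(X)}]$, so no finite $\delta$-fine cover (with $\delta(X) = 2^{-f(X)}$) can cover $2^\omega$ unless $0^{(\alpha)}$ itself is among the tags. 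Applying Cousin's lemma for Baire class 2 functions (via \cref{lem:double_jump_and_Cousin}) then forces $0^{(\alpha)}$ into the model. The task is to make this uniform over \emph{all} well-orderings simultaneously, starting from an arbitrary $x$ and an arbitrary ordinal $\alpha$ presented as a subset of $\Nat$.

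The key move is to work over Cantor space $2^\omega$ and, given $\alpha$ and $x$, design a single Baire class 2 gauge $\delta$ on $2^\omega$ whose $\delta$-fine covers encode $x^{(\alpha)}$. One wants $\delta$ to be defined so that: if $X$ codes (the initial segments of) a genuine jump-hierarchy along an initial segment of $\alpha$ relative to $x$, then $\delta(X)$ is small — shrinking as more of the correct hierarchy is coded — while if $X$ makes any \emph{error} (some $X^{[\beta]} \ne (X^{[<\beta]})'$ relative to $x$), then $\delta(X)$ is bounded below by a quantity depending on the first place the error occurs. The point is that the first error location is a $\Delta^0_3$ function of $X$ (checking $X^{[\beta]} = (X^{[<\beta]} \oplus x)'$ is $\Pi^0_2$, and finding the least $\beta$ where it fails, together with the least witnessing $k$, stays $\Delta^0_3$), so by \cref{lem:double_jump_is_Baire_class_2} the resulting $\delta$ is Baire class 2. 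A $\delta$-fine cover $P$ is then a finite set of ``candidates,'' and the combinatorics should force that the correct hierarchy $x^{(\alpha)}$, or at least enough of it to be assembled by $\ACA$, must be essentially present — because any candidate with an early error has a large $\delta$-ball that, crucially, \emph{misses} the correct hierarchy and also misses all candidates that agree with the correct hierarchy further along. One should first do this for $x^{(\alpha)}$ with $\alpha$ a bona fide ordinal but \emph{arbitrary}, then note (as the excerpt's $\omega$-model discussion suggests) that the same $\Delta^0_3$ definition and the absoluteness of ``$P$ is finite and covers'' pushes the conclusion down.

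I would organise the proof as follows. First, fix $x$ and a well-ordering $\alpha$; we argue in $\RCA$ augmented by Cousin's lemma for Baire class 2 functions (which, by \cref{prop:ACA_implies_CL1}'s reversal, i.e.\ \cref{thm:clb1->aca}, already gives $\ACA$). Second, define, for $X \in 2^\omega$ viewed as a subset of $\alpha \times \Nat$, the ``error ordinal'' $\beta(X) \in \alpha$ (least $\beta$ with $X^{[\beta]} \ne (X^{[<\beta]}\oplus x)'$, if any) and error witness $k(X)$; set $f(X)$ to code a pair out of which one recovers that $0^{(\alpha)}$-analogue — precisely, $f(X)$ large enough that the correct hierarchy $H^* = x^{(\alpha)}$ is not in $[X\rest{f(X)}]$ whenever $X \ne H^*$ — and check $f$ is $\Delta^0_3$, hence $\delta = 2^{-f}$ is a Baire class 2 gauge by \cref{lem:double_jump_is_Baire_class_2}. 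Third, apply \cref{lem:double_jump_and_Cousin} to get a finite $P$ with $2^\omega = \bigcup_{p\in P}[p\rest{f(p)}]$; since $H^* \notin \bigcup_{p\in P,\, p\ne H^*}[p\rest{f(p)}]$, we must have $H^* \in P$, so $x^{(\alpha)}$ exists. Fourth, observe the argument never used that $\alpha$ is standard, only that it is a well-ordering in the model, and conclude $\ATR$.

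\textbf{Main obstacle.} The delicate point is showing the error-location function is genuinely $\Delta^0_3$ when $\alpha$ is an \emph{arbitrary} countable ordinal rather than a fixed computable one: the predicate ``$H$ is a correct jump-hierarchy along $\alpha$ relative to $x$'' is $\Pi^0_2$ in $H$, $x$, and $\alpha$, but verifying that one has the \emph{least} error ordinal $\beta(X)$ requires quantifying over all $\gamma < \beta$ — which is fine since it is bounded within $\alpha$ as a subset of $\Nat$ — yet one must be careful that $\delta$ still admits a Baire class 2 code \emph{uniformly} in $\alpha$ and $x$, so that the whole construction can be iterated/relativised to derive the full $\ATR$ rather than just $x^{(\alpha)}$-existence for a single pair. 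I expect the bookkeeping around making $f$ total (i.e.\ that $\beta(X), k(X)$ are defined for every $X \ne H^*$, which needs that a set failing to be the hierarchy fails at some \emph{specific} coordinate) and verifying the $\Delta^0_3$ bound to be the technical heart; the covering combinatorics, by contrast, should be a routine adaptation of the $0^{(\omega)}$ warm-up.
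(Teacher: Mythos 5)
Your gauge construction is the right one, and the reduction through \cref{lem:double_jump_is_Baire_class_2} and \cref{lem:double_jump_and_Cousin} matches the paper's. The genuine gap is in the step you call ``a routine adaptation of the $0^{(\w)}$ warm-up'': deriving the contradiction from a finite cover $P$. Your operative argument is that $H^*=x^{(\alpha)}$ is missed by every $[p\rest{f(p)}]$ with $p\ne H^*$, hence $H^*\in P$. This presupposes that $H^*$ exists as an object one can reason about, at least externally. Inside the model it does not exist (that is the hypothesis for contradiction), so ``$H^*\notin\bigcup_{p\ne H^*}[p\rest{f(p)}]$'' is not a usable statement; and the external/absoluteness escape, which works in the warm-up because $\w$ is a genuine ordinal, fails for the reversal to $\ATR$: there $\alpha$ is only a well-ordering \emph{in the sense of the model}, and for a pseudo-well-ordering there need be no correct jump hierarchy in the real world either. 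Your closing observation that ``the argument never used that $\alpha$ is standard'' is exactly where this breaks --- your argument does use the external existence of $H^*$, which is precisely what true well-foundedness of $\alpha$ was buying you.

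The paper's fix makes the covering argument entirely internal. Assuming $x^{(\alpha)}$ does not exist, let $\+I=\{\beta<\alpha : 0^{(\beta)}\text{ exists}\}$ (a definable class, not a set of the model). For every $X$ in the model the least error level $\beta(X)$ lies in $\+I$, because $X^{[<\beta(X)]}$ \emph{is} a correct hierarchy up to $\beta(X)$; and $f(X)$ is chosen so that $[X\rest{f(X)}]$ excludes $0^{(\gamma)}$ for \emph{every} $\gamma\ge\beta(X)$ in $\+I$, not merely the nonexistent full hierarchy. Since $\ACA$ shows $\+I$ has no greatest element, any finite $P$ has $\beta^*=\max\{\beta(X):X\in P\}\in\+I$, and $0^{(\gamma)}$ for some $\gamma>\beta^*$ in $\+I$ is an object \emph{of the model} witnessing, inside the model, that $P$ is not a cover --- the desired contradiction. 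You gesture at this when you say an erroneous candidate's ball ``also misses all candidates that agree with the correct hierarchy further along,'' but your formal conclusion falls back on $H^*$. Replacing the full hierarchy by the partial hierarchies $0^{(\gamma)}$, $\gamma\in\+I$, is the missing idea; the rest of your outline (the $\Delta^0_3$ error-location function, totality of $f$ under the contradiction hypothesis, relativisation to $x$) is sound and agrees with the paper.
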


\begin{proof}
    By \cref{thm:clb1->aca}, we may work in~$\ACA$. 

    \smallskip
    
    Let $\alpha$ be an ordinal and let $x\in 2^\w$; and suppose that $x^{(\alpha)}$ does not exist, i.e., there is no iteration of the Turing jump (relativised to~$x$) along~$\alpha$. For simplicity of notation, suppose $x = 0$. 

    \smallskip
    
    Let 
    \[
        \+I = \left\{ \beta<\alpha \,:\,  0^{(\beta)}\,\text{exists} \right\}. 
    \]
    The set~$\+I$ does not exist (as a set in the model), but is definable. Again note that since $\ACA$ holds, for all $\beta\in \+I$ there is exactly one iteration of the Turing jump along~$\beta$, which is the set we call $0^{(\beta)}$. We observe that~$\+I$ cannot have a greatest element; if $0^{\beta}$ exists then by $\ACA$, so does $0^{(\beta+1)}$. 

    For any $X\in 2^\w$, since $X$ is not an iteration of the Turing jump along~$\alpha$, by arithemtic comprehension, there is a least $\beta<\alpha$ such that $X^{[\beta]}\ne (X^{[<\beta]})'$; call this $\beta(X)$. Then $X^{[<\beta(X)]} = 0^{(\beta(X))}$, so $\beta(X)\in \+I$. Similar to the argument above, we also let $k(X)$ be the least~$k$ such that 
    \[
        X^{[\beta(X)]}(k) \ne \big(X^{[<\beta(X)]}\big)'(k).
    \]
    From $k(X)$ and~$\beta(X)$ we can compute some $f(X)\in \Nat$ such that for all $\gamma \ge \beta(X)$ in $\+I$, 
    \[
        0^{(\gamma)} \notin [X\rest{f(X)}]. 
    \]
    Let $P\subset 2^\w$ be finite. Then 
    \[
        \beta^* = \max \{ \beta(X)\,:\, X\in P\}
    \]
    is an element of $\+I$. Since $\+I$ does not have a last element, take any $\gamma>\beta^*$ in~$\+I$; then 
    \[
        0^{(\gamma)} \notin \bigcup_{X\in P} [X\rest{f(P)}]. 
    \]
    Therefore, \cref{lem:double_jump_and_Cousin} fails, and so Cousin's lemma for Baire class 2 functions does not hold. 
\end{proof}

% section higher_baire_classes (end)

% \bibliography{cousin}
% \bibliographystyle{alpha}

\end{document}